\newcommand*{\R}{{\mathbb R}}
\newcommand*{\e}{\varepsilon}
\newcommand*{\la}{\langle}
\newcommand*{\ra}{\rangle}
\newcommand*{\const}{\mathrm{const}}
\def\C{\mathcal C}
\def\W{\mathcal W}
\newcommand*{\one}{\mathbbm{1}}
\def\bld{\boldsymbol}
\def\Blm{\bld{\lambda}}
\newcommand*{\Bmu}{\bld\mu}
\newcommand*{\Bpi}{\bld\pi}
\def\p{\mathtt{p}}
\def\q{\mathtt{q}}
\renewcommand{\u}{\mathtt{u}}
\renewcommand{\v}{\mathtt{v}}
\def\lm{\lambda}
\newcommand*{\norm}[1]{\left\lVert#1\right\rVert}
\DeclareMathOperator*{\argmin}{argmin}
\DeclareMathOperator{\spn}{span}
\DeclareMathOperator{\diag}{diag}
\DeclareMathOperator{\nnz}{nnz}
\theoremstyle{remark}
\newtheorem{Rem}{Remark}
\theoremstyle{definition}
\newtheorem{theorem}{Theorem}
\newtheorem{lemma}{Lemma}
\def\ddm#1{{\color{black}#1}} 
\def\pd#1{{\color{black}#1}} 
\def\ak#1{{\color{black}#1}} 
\def\ag#1{{\color{black}#1}} 
\def\gav#1{{\color{black}#1}} 
\title{On the Complexity of Approximating Wasserstein Barycenter} 
\author{
Alexey Kroshnin
\thanks{Institute for Information Transmission Problems RAS, Moscow; National Research University Higher School of Economics, Moscow, akroshnin@hse.ru}
\and
Darina Dvinskikh
\thanks{Weierstrass Institute for Applied Analysis and Stochastics, Berlin; Institute for Information Transmission Problems RAS, Moscow, darina.dvinskikh@wias-berlin.de}
\and
Pavel Dvurechensky
\thanks{Weierstrass Institute for Applied Analysis and Stochastics, Berlin; Institute for Information Transmission Problems RAS, Moscow, pavel.dvurechensky@wias-berlin.de}
\and
Alexander Gasnikov
\thanks{Moscow Institute of Physics and Technology, Moscow; Institute for Information Transmission Problems RAS, Moscow, gasnikov@yandex.ru}
\and
Nazarii Tupitsa
\thanks{Institute for Information Transmission Problems RAS, Moscow, tupitsa@phystech.edu}
\and
C\'{e}sar A. Uribe 
\thanks{Massachusetts Institute of Technology, Cambridge, cauribe@mit.edu} 
}							
\date{\today}
\begin{document}

\maketitle

\begin{abstract}
We study the complexity of approximating Wassertein barycenter of $m$ discrete measures, or histograms of size $n$ by contrasting two alternative approaches, both using entropic regularization. The first approach is based on the Iterative Bregman Projections (IBP) algorithm for which our novel analysis gives a complexity bound proportional to $\frac{mn^2}{\e^2}$ to approximate the original non-regularized barycenter. 
 Using an alternative accelerated-gradient-descent-based approach, we obtain a complexity proportional to
$\frac{mn^{2\gav{.5}}}{\e} $. 
As a byproduct, we show that the regularization parameter in both approaches has to be proportional to $\e$, which causes instability of both algorithms when the desired accuracy is high. To overcome this issue, we propose a novel proximal-IBP algorithm, which can be seen as a proximal gradient method, which uses IBP on each iteration to make a proximal step.
We also consider the question of scalability of these algorithms using approaches from distributed optimization and show that the first algorithm can be implemented in a centralized distributed setting (master/slave), while the second one is amenable to a more general decentralized distributed setting with an arbitrary network topology.

\end{abstract}
\textbf{Keywords:} Optimal transport, Wasserstein barycenter, Sinkhorn's algorithm, Accelerated Gradient Descent, distributed optimization  

\noindent \textbf{AMS Classification:} 90C25
, 90C30
, 90C06
, 90C90.

\section*{Introduction}
Optimal transport (OT) \cite{Monge1781,kantorovich1942translocation} is currently generating an increasing attraction in statistics, machine learning and optimization communities. Statistical procedures based on optimal transport are available \cite{bigot2012consistent,barrio2015statistical,ebert2017construction,le2017existence} as well as many applications in different areas of machine learning including unsupervised learning \cite{arjovsky2017wasserstein,bigot2017geodesic}, semi-supervised learning \cite{solomon2014wasserstein}, clustering \cite{ho17multilevel}, text classification \cite{kusner2015from}. 
Optimal transport distances lead to the concept of Wasserstein barycenter, which allows to define a mean of a set of complex objects, e.g. images, preserving their geometric structure \cite{cuturi2014fast}.
In this paper we focus on the computational aspects of optimal transport, namely on approximating Wasserstein barycenter of a set of histograms.

Starting with \cite{altschuler2017near-linear}, several groups of authors addressed the question of Wasserstein distance approximation complexity \cite{chakrabarty2018better,dvurechensky2018computational,blanchet2018towards,lin2019efficient}. Implementable schemes are based on Sinkhorn's algorithm, which was first applied to OT in \cite{cuturi2013sinkhorn}, and accelerated gradient descent proposed as an alternative in \cite{dvurechensky2018computational}.
Much less is known about the complexity of approximating Wasserstein \textit{barycenter}. The works \cite{staib2017parallel,uribe2018distributed,dvurechensky2018decentralize}, are in some sense close, but do not provide an explicit answer. 
Following \cite{genevay2016stochastic}, \cite{staib2017parallel} use stochastic gradient descent and estimate the convergence rate of their algorithm. From their rate, one can obtain the iteration complexity $\frac{\kappa R^2}{\e^2}$ to achieve accuracy $\e$ in approximation of the barycenter, where $\kappa$ is some constant depending on the problem data, i.e. transportation cost matrices, $R$ is some distance characterizing the solution of the dual problem. \cite{dvurechensky2018decentralize} consider regularized barycenter, but do not show, how to choose the regularization parameter to achieve $\e$-accuracy.  

Following \cite{dvurechensky2018computational}, we study two alternative approaches for approximating Wasserstein barycenter based on entropic regularization \cite{cuturi2013sinkhorn}. The first approach is based on Iterative Bregman Projection (IBP) algorithm \cite{benamou2015iterative}, which can be considered as a general alternating projections algorithm and also as a generalization of the Sinkhorn's algorithm \cite{sinkhorn1974diagonal}. The second approach is based on constructing a dual problem and solving it by primal-dual accelerated gradient descent. For both approaches, we show, how the regularization parameter should be chosen in order to approximate the original, non-regularized barycenter. A kind of mixed approach, namely an accelerated Iterative Bregman Projection algorithm was proposed in \cite{guminov2019acceleratedAM}. 

We also address the question of scalability of computations in the Big Data regime, i.e. the size of histograms $n$ and the number of histograms $m$ are large. In this case the dataset of $n$ histograms can be distributedly produced or stored in a network of agents/sensors/computers with the network structure given by an arbitrary connected graph. In a special case of centralized architecture, i.e. if there is a central "master" node surrounded by "slave" nodes, parallel algorithms such as \cite{staib2017parallel} can be applied. In a more general setup of arbitrary networks it makes sense to use decentralized distributed algorithms in the spirit of distributed optimization algorithms \cite{Nedic2017cc, Scaman2017Optimal}.


\subsection*{Related Work}
It is very hard to cover all the increasing stream of works on OT and we mention these books \cite{villani2008optimal,santambrogio2015optimal,peyre2018computational} as a starting point and the references therein. Approximation of Wasserstein barycenter was considered in
\cite{cuturi2014fast,bonneel2015sliced,benamou2015iterative,staib2017parallel,puccetti2018computation,claici2018stochastic,uribe2018distributed,dvurechensky2018decentralize} using different techniques as Sinkhorn-type algorithm, first-order methods, Newton-type methods.
Considering the primal-dual approach based on accelerated gradient descent, our paper shares some similarities with \cite{cuturi2016smoothed} with the main difference that we are focused on complexity and scalability of computations and explicitly analyze the algorithm applied to the dual problem.

There is a vast amount of literature on accelerated gradient descent with the canonical reference being \cite{nesterov1983method}. Primal-dual extensions can be found in \cite{lan2011primal-dual,tran-dinh2015smooth,yurtsever2015universal,chernov2016fast,dvurechensky2016primal-dual,dvurechensky2017adaptive,anikin2017dual,nesterov2018primal-dual,lin2019efficient}. We are focused on the extensions amenable to the decentralized distributed optimization, so that these algorithms can be scaled for large problems.

Distributed optimization algorithms were considered by many authors with the classical reference being \cite{bertsekas1989parallel}. Initial algorithms, such as Distributed Gradient Descent~\cite{nedic2009distributed}, were relatively slow compared with their centralized counterparts. However, recent work has made significant advances towards a better understanding of the optimal rates of such algorithms and their explicit dependencies to the function and network parameters~\cite{lan2017communication,Scaman2017Optimal,uribe2018dual}. These approaches has been extended to other scenarios such as time-varying graphs~\cite{rogozin2018optimal,maros2018panda,wu2017fenchel}. The distributed setup is particularly interesting for machine learning applications on the big data regime, where the number of data points and the dimensionality is large, due to its flexibility to handle intrinsically distributed storage and limited communication, as well as privacy constraints~\cite{he2018cola,wai2018sucag}.

\subsection*{Our contributions}

\begin{itemize}
    \item We consider $\gamma$-regularized Wasserstein barycenter problem and obtain complexity bounds for finding an approximation to the regularized barycenter by two algorithms. The first one is Iterative Bregman Projections algorithm \cite{benamou2015iterative}, for which we prove complexity proportional to $\frac{1}{\gamma \e}$
     to achieve accuracy $\e$. The second one is based on accelerated gradient descent (AGD) and has complexity proportional to $\sqrt{\frac{n}{\gamma \e}}$. The benefit of the second algorithm is that it is better scalable and can be implemented in the decentralized distributed optimization setting over an arbitrary network.
    \item We show, how to choose the regularization parameter in order to find an $\e$-approximation for the non-regularized Wasserstein barycenter and find the resulting complexity for IBP to be proportional to $\frac{m n^2}{\e^2}$ and for AGD to be proportional to $\frac{m n^{2.5}}{\e}$.
    \item As we can see from the complexity bounds for IBP and AGD, they depend on the regularization parameter $\gamma$ quite badly regarding that this parameter has to be small. To overcome this drawback we propose a proximal-IBP method, which can be considered as a proximal method using IBP on each iteration to find the next iterate.
\end{itemize}

\section{Problem Statement and Preliminaries}
\label{S:prel}

\subsection{Notation}
We define the probability simplex as 
$S_n(1) = \{q \in \R_+^n \mid \sum_{i=1}^n q_i = 1\}$. 
Given two discrete measures $p$ and $q$ from $S_n(1)$ we introduce the set of their coupling measures as
\begin{equation*}
    \Pi(p,q) = \{ \pi \in \R_+^{n \times n}: \pi \one = p, \pi^T \one = q\}.
\end{equation*}
For coupling measure $\pi \in \R_+^{n \times n}$ we denote the negative entropy (up to an additive constant) as
\[H(\pi) := \sum^n_{i, j=1} \pi_{i j} \left(\ln \pi_{i j} - 1\right) = \la \pi, \ln \pi - \one \one^T \ra.\]
Here and further by $\ln(A)$ ($\exp(A)$) we denote the element-wise logarithm (exponent) of matrix or vector $A$, and $\la A, B \ra := \sum_{i, j = 1}^n A_{i j} B_{i j}$ for any $A, B \in \R^{n \times n}$. For two matrices $A$ $B$ we also define element-wise multiplication and element-wise division as $ A \odot B$ and $\frac{A}{B}$ respectively.
Kullback-Leibler divergence for measures $\pi, \pi' \in \R_+^{n \times n}$ is defined as the Bregman divergence associated with $H(\cdot)$:
\begin{equation*}
    KL(\pi | \pi') 
    := \sum_{i, j = 1}^n \left(\pi_{i j} \ln \left(\frac{\pi_{i j}}{\pi'_{i j}}\right) - \pi_{i j} + \pi'_{i j}\right)
    = \la \pi, \ln \pi - \ln \pi' \ra + \la \pi' - \pi, \one \one^T \ra.
\end{equation*}
We also define a symmetric cost matrix
$C \in \R_+^{n \times n}$, which element $c_{i j}$ corresponds to the cost of moving bin $i$ to bin $j$. $\norm{C}_\infty$ denotes the maximal element of this matrix.

 We refer to $\lm_{\max}(W)$ as the maximum eigenvalue of a symmetric matrix $W$, and $\lm^+_{\min}(W)$ as the minimal non-zero eigenvalue, and define the condition number of matrix $W$ as $\chi(W)$. We use symbol $\one$ as a column of ones.


\subsection{Wasserstein barycenters and entropic regularization}

Given two probability measures $p,q \in S_n(1)$ and a cost matrix $C \in \R^{n \times n}$ we define optimal transportation distance between them as
\begin{equation}\label{eq:OT}
    \W(p, q) := \min_{\pi \in \Pi(p,q)} \la \pi, C \ra.
\end{equation}

For a given set of probability measures $\{p_1, \dots, p_m\}$ and cost matrices $C_1, \dots, C_m \in \R_+^{n \times n}$ we define their weighted barycenter with weights $w \in S_n(1)$ as a solution of the following convex optimization problem:
\begin{equation}\label{prob:unreg_bary}
    \min_{q \in S_n(1)} \sum_{l=1}^m w_l \W(p_l, q)\gav{,}
\end{equation}
\gav{where $w_l \ge 0$, $l = 1,...,m$ and}
\gav{$$\sum_{l=1}^m w_l = 1.$$}

We use $c$ to denote $\max_{l=1,...,m} \norm{C_l}_{\infty}$.
Using entropic regularization proposed in \cite{cuturi2013sinkhorn} we define regularized OT-distance for $\gamma \geq 0$:
\begin{equation}\label{eq:reg_OT}
    \W_\gamma(p,q) := \min_{\pi \in  \Pi(p,q)} \left\{ \la \pi, C \ra + \gamma H(\pi) \right\}.
\end{equation}
Respectively, one can consider regularized barycenter that is a solution of the following problem:
\begin{equation}\label{prob:reg_bary}
    \min_{q \in S_n(1)} \sum_{l=1}^m w_l \W_\gamma(p_l, q).
\end{equation}

\section{Complexity of WB by Iterative Bregman Projections}
\label{S:IBP_compl}



In this section we provide theoretical analysis of the Iterative Bregman Projections algorithm \cite{benamou2015iterative} for regularized Wasserstein barycenter and obtain iteration complexity $O\left(\frac{c}{\gamma \e}\right)$ with $c := \max_{l=1,...,m}\|C_l\|_{\infty}$. Then we estimate the bias itroduced by regularization and estimate  the value of $\gamma$ to obtain an $\e$-approximation for the non-regularized barycenter. Combining this result with the iteration complexity of IBP, we obtain complexity 
$\widetilde{O}\left(\frac{c^2mn^2}{\e^2}\right)$
for approximating   non-regularized barycenter by the IBP algorithm. \gav{This algorithm can be implemented in a centralized distributed manner such that each node performs $\widetilde{O}\left(\frac{c^2n^2}{\e^2}\right)$ arithmetic operations and the number of communication rounds is $\widetilde{O}\left(\frac{c^2}{\e^2}\right)$}.

\subsection{Convergence of IBP for regularized barycenter}\label{ConvergenceIBP}
In this subsection we analyze Iterative Bregman Projection Algorithm \cite[Section 3.2]{benamou2015iterative} and analyze its complexity for solving problem~\eqref{prob:reg_bary}. We slightly reformulate this problem as
\begin{align}
    \min_{q \in S_n(1)} \sum_{l=1}^m w_l \W_\gamma (p_l,q) &= \min_{\substack{q \in S_n(1), \\ \pi_l \in \Pi(p_l, q),\\
    l = 1, \dots, m }} \sum_{l = 1}^m w_l \bigl\{\la \pi_l, C_l \ra + \gamma H(\pi_l) \bigr\} \notag\\ 
    &= \min_{\substack{ \pi_l \in \R_+^{n \times n}\\ \pi_l \one = p_l,\; \pi_l^T \one = \pi_{l+1}^T \one, \\
    l=1,\dots,m}} \sum_{l = 1}^m w_l \bigl\{\la \pi_l, C_l \ra + \gamma H(\pi_l) \bigr\}\label{prob:reg_bary_2}
\end{align}
and construct its dual (see the details below). 
To solve the dual problem we reformulate the IBP algorithm as Algorithm~\ref{Alg:dual_IBP}\footnote{\gav{In the original paper \cite{benamou2015iterative} there were misprints in description of IBP. Correct author's variant can be found in \url{https://github.com/gpeyre/2014-SISC-BregmanOT}. In Algorithm~\ref{Alg:dual_IBP} we use different denotations. First of all, our $u$ corresponds to $\ln v$ from \cite{benamou2015iterative} and our $v$ corresponds to $\ln u$ from \cite{benamou2015iterative}. Secondly, our transport plan matrix equals to transpose transport plan matrix from \cite{benamou2015iterative}. Thirdly, we build a little bit different dual problem, by introducing additional constraint $\sum_{l=1}^m w_lv_l = 0$, see Lemma~\ref{lemma:dual_2}. This allows us to simplify calculations in line 3 of Algorithm~\ref{Alg:dual_IBP}.}}. Notably, our reformulation of the IBP algorithm allows to solve simultaneously the primal and dual problem and has an adaptive stopping criterion (see line 7), which does not require to calculate any objective values.   

\begin{algorithm}[H]
    \caption{Dual Iterative Bregman Projection }
    \label{Alg:dual_IBP}
    \begin{algorithmic}[1]
        \REQUIRE $C_1, \dots, C_m$, $p_1, \dots, p_m$, $\gamma > 0$, $\e' > 0$
        \STATE $u_l^0 := 0$, $v_l^0 := 0$, $K_l := \exp\left(-\tfrac{C_l}{\gamma}\right)$, $l = 1, \dots, m$ 
        \REPEAT \STATE 
                $v_l^{t + 1} := \sum_{k = 1}^m w_k \ln K_k^T e^{u_k^t} - \ln K_l^T e^{u_l^t}, \quad
                \u^{t + 1} := \u^t$ 
                \STATE
                $t := t + 1$
                \STATE  $u^{t + 1}_l := \ln p_l - \ln K_l e^{v_l^t}, \quad 
                \v^{t + 1} := \v^t$ 
                \STATE $t := t + 1$
        \UNTIL{$\sum_{l = 1}^m w_l \norm{B_l^T(u_l^t, v_l^t) \one - \bar{q}^t}_1 \le \e'$, where $B_l(u_l, v_l)=\diag\left(e^{u_l}\right) K_l \diag\left(e^{v_l}\right)$, $\bar{q}^t := \sum_{l = 1}^m w_l B_l^T(u_l^t, v_l^t) \one$}
        \ENSURE $B_1(u^t_1, v^t_1), \dots, B_m(u^t_m, v^t_m)$  
    \end{algorithmic}
\end{algorithm}

Before we move to the analysis  of the algorithm let us discuss the scalability of this algorithm by using centralized distributed computations framework. This framework includes a master and slave nodes. Each $l$-th slave node stores data $p_l$, $C_l$, $K_l$ and variables $u_l^t$, $v_l^t$. On each iteration $t$, it  calculates $K_l^Te^{u_l^t}$ and sends it to the master node, which aggregates these products to the sum $\sum_{k = 1}^m w_k \ln K_k^T e^{u_k^t}$ and sends this sum back to the slave nodes. Based on this information, slave nodes update $v_l^t$ and $u_l^t$. So, the main computational cost of multiplying a matrix by a vector, can be distributed on $m$ slave nodes and the total working time will be smaller. It is not clear, how this algorithm can be implemented on a general network, for example when the data is produced by a distributed network of sensors without one master node. In contrast, as we illustrate in Section \ref{S:APDAGD_compl}, the alternative accelerated-gradient-based approach can be implemented on an arbitrary network.   


\begin{theorem}
\label{thm:complexity_IBP}
    For given $\e'$ Algorithm~\ref{Alg:dual_IBP} stops in number of iterations $N$ satisfying 
    \begin{align*}
        N \le 4 + \frac{44 R_v}{\e'}
        = O\left(\frac{\max_l \norm{C_l}_\infty}{\gamma \e'}\right).
    \end{align*}
    It returns $B_1, \dots, B_m$ s.t.
    \[
    \sum_{l = 1}^m w_l \norm{B_l \one - \bar{q}}_1 \le \e', \quad \bar{q} = \sum_{l = 1}^m w_l B_l \one,
    \]
    and
    \begin{equation}
        \sum_{l = 1}^m w_l \left(\left\la C_l, B_l \right\ra + \gamma H(B_l)\right) 
        - \sum_{l = 1}^m w_l \left(\left\la C_l, \pi_{\gamma,l}^* \right\ra + \gamma H(\pi^*_{\gamma, l})\right)
        \le \max_l \norm{C_l}_\infty \e',
        \label{regularized_residual}
    \end{equation}
    where $\Bpi_\gamma^* = [\pi^*_{\gamma, 1}, \dots, \pi^*_{\gamma, m}]$ is a solution of problem~\eqref{prob:reg_bary_2}. 
\end{theorem}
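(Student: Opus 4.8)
The plan is to analyze Algorithm~\ref{Alg:dual_IBP} as a block-coordinate (alternating) ascent method on the dual of problem~\eqref{prob:reg_bary_2}, following the standard template for Sinkhorn-type complexity proofs (as in \cite{altschuler2017near-linear,dvurechensky2018computational}), but adapted to the barycenter setting. First I would write out the dual problem explicitly: Lagrangian duality applied to \eqref{prob:reg_bary_2}, with dual variables $u_l$ for the constraints $\pi_l\one = p_l$ and $v_l$ for $\pi_l^T\one = \pi_{l+1}^T\one$ (together with the normalization $\sum_l w_l v_l = 0$ mentioned in the footnote), gives a smooth concave dual objective $\varphi(u,v)$ whose partial maximization in $v$ (for fixed $u$) and in $u$ (for fixed $v$) has the closed forms appearing in lines 3 and 5 of the algorithm. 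So the two half-steps of the algorithm are exact block maximizations, and $B_l(u_l,v_l) = \diag(e^{u_l})K_l\diag(e^{v_l})$ is the primal point recovered from the dual iterate via the usual KKT relation $\pi_l = \exp\bigl((-C_l + u_l\one^T + \one v_l^T)/\gamma\bigr)$ up to the rescaling by $\gamma$.

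The core of the argument is the iteration-complexity bound $N \le 4 + 44R_v/\e'$. Here $R_v$ should be a bound on the norm of an optimal dual variable $v^*$, which by standard arguments scales like $\max_l\|C_l\|_\infty/\gamma$ — this is where the $O(\max_l\|C_l\|_\infty/(\gamma\e'))$ comes from. The strategy: show that each full iteration of the algorithm decreases the dual suboptimality gap $\varphi^* - \varphi(u^t,v^t)$ by an amount controlled from below by the current constraint violation $\sum_l w_l\|B_l^T\one - \bar q^t\|_1$ (a Pinsker-type / smoothness-plus-strong-convexity estimate relating the decrease to the KL-divergence between successive marginals, which in turn lower-bounds the $\ell_1$ violation squared). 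Telescoping, and using that the total gap is bounded by something like $R_v\cdot(\text{initial violation})$, yields that the violation falls below $\e'$ within $O(R_v/\e')$ iterations. The slightly awkward additive "$4+$" and the constant $44$ come from handling the first couple of half-steps separately (the $u^0=v^0=0$ initialization) and from tracking the constants through Pinsker's inequality and the bound relating one half-step to a full step.

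Once the stopping criterion is reached, the first displayed conclusion ($\sum_l w_l\|B_l\one - \bar q\|_1 \le \e'$) is literally the stopping condition in line 7. For the bound \eqref{regularized_residual} on the regularized objective residual, I would use weak duality together with the fact that at $(u^t,v^t)$ the primal point $\{B_l\}$ is \emph{feasible for the $p_l$-marginal constraints exactly} (by the line-5 update, $B_l\one = p_l$) and only approximately feasible for the coupling constraints $\pi_l^T\one = \pi_{l+1}^T\one$; one replaces $\{B_l\}$ by a nearby exactly-feasible point (rounding the $q$-marginals to their weighted average $\bar q$, e.g.\ via an Altschuler–Weed–Rigollet-style rounding), bounds the change in $\langle C_l,\cdot\rangle + \gamma H(\cdot)$ by the constraint violation times $\max_l\|C_l\|_\infty$ (the entropy term's perturbation being absorbed since $\gamma H$ is bounded and the rounding moves mass proportional to the violation), and compares the rounded objective value with the optimal value $\sum_l w_l(\langle C_l,\pi^*_{\gamma,l}\rangle + \gamma H(\pi^*_{\gamma,l}))$ using the duality gap, which is itself small by the iteration analysis.

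The main obstacle I anticipate is the per-iteration progress estimate: unlike two-marginal Sinkhorn, here the $v$-update couples all $m$ plans through the weighted average, so one must verify that alternating between "fix all $q$-marginals to the common weighted average" and "restore all $p_l$-marginals" still gives a clean descent lemma with constants independent of $m$. Getting the right dependence — so that the final bound has $c/(\gamma\e')$ with no hidden $m$ or $n$ factor — requires carefully choosing the potential/Lyapunov function (likely the dual gap measured against a fixed optimal $v^*$ with the $\sum_l w_lv_l=0$ normalization, which is exactly why the authors introduced that normalization in the footnote) and invoking Pinsker on the aggregated marginal rather than marginal-by-marginal.
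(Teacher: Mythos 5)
Your overall architecture matches the paper's: interpret the two half-steps as exact block minimizations of the dual $f(\u,\v)$, bound the oscillation of the $v$-iterates and of $v^*$ by $R_v = O(\max_l\norm{C_l}_\infty/\gamma)$, relate the dual gap to the violation $\sum_l w_l\norm{q_l^t-\bar q^t}_1$ via the gradient (convexity) inequality, and lower-bound the per-iteration decrease by the squared violation. However, there are two genuine gaps. First, your assembly of the rate does not deliver $O(R_v/\e')$: plain telescoping of a decrease of order $(\e')^2$ against an initial gap of order $R_v$ gives only $O(R_v/(\e')^2)$, and the recursion $\delta_{t+1}\le\delta_t - \delta_t^2/(11R_v^2)$ alone gives $\delta_t = O(R_v^2/t)$, which does not bound the \emph{violation} (the excess lemma only gives violation $\ge \delta_t/R_v$, a lower bound). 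The paper's proof needs the two-phase switching argument: run the $1/\delta_t$ recursion until $\delta_t\le s= R_v\e'$, then use that each further iteration with violation $>\e'$ removes at least $(\e')^2/11$ from a budget of size $R_v\e'$; optimizing over the switching level is exactly where $4+44R_v/\e'$ comes from. Your sketch names both ingredients but does not combine them, and as stated the conclusion "within $O(R_v/\e')$ iterations" does not follow.

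Second, your proof of \eqref{regularized_residual} via rounding to a feasible point is not what the statement asks and would not cleanly close: the bound is about the returned $B_l$ themselves (infeasible in the coupling constraint), so after bounding a rounded point you must come back, and the entropy perturbation is not "absorbed" — $\gamma H$ is not Lipschitz with constant $\max_l\norm{C_l}_\infty$, so you would pick up extra $\gamma\e'\ln(\cdot)$-type terms and lose the stated constant. The paper avoids rounding entirely (Algorithm~\ref{Alg:round} only enters Theorem~\ref{thm:complexity_IBP_to_bary}): since $N$ is even, $B_l\one = p_l$, and one has the exact identity
\begin{equation*}
\sum_{l=1}^m w_l\bigl(\la C_l,B_l\ra + \gamma H(B_l)\bigr) = -\gamma f(\u^N,\v^N) + \gamma\sum_{l=1}^m w_l\la v_l^N, q_l^N-\bar q^N\ra,
\end{equation*}
so the residual equals $\gamma\bigl(f(\u^*,\v^*)-f(\u^N,\v^N)\bigr)$ (which is $\le 0$) plus the correction term, which is bounded by $\tfrac{\gamma R_v}{2}\e'\le\max_l\norm{C_l}_\infty\e'$ using the oscillation bound (after centering $v_l^N$, since $q_l^N-\bar q^N$ has zero sum) and the stopping criterion. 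Your appeal to "the duality gap, which is itself small by the iteration analysis" points in the right direction (the excess lemma at the stopping iterate gives gap $\le R_v\e'$), but without the identity above and the centering trick you do not recover the claimed constant for the unrounded $B_l$.
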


Our first step is to recall the IBP algorithm from \cite{benamou2015iterative}, formulate the dual problem for \eqref{prob:reg_bary_2} and show that our Algorithm~\ref{Alg:dual_IBP} solves this dual problem and is equivalent to the IBP algorithm. 

Following the approach from \cite{benamou2015iterative} we present the problem~\eqref{prob:reg_bary_2} in a Kullback--Leibler projection form.
\begin{align}\label{prob:bary_KL}
    \min\limits_{\Bpi \in \C_1 \cap  \C_2} \sum_{l = 1}^m w_l KL\left(\pi_l | \theta_l\right),
\end{align}
for $\theta_l = \exp\left(-\frac{C_l}{\gamma}\right)$ and the following affine convex sets $\C_1$ and $\C_2$  
\begin{align}
    \C_1 &= \left\{\Bpi = [\pi_1, \dots, \pi_m] : \forall l ~ \pi_l \one = p_l \right\},  \notag \\
    \C_2 &= \left\{\Bpi = [\pi_1, \dots, \pi_m] : \exists q \in S_n(1)\; \forall l ~ \pi_l^T \one = q \right\}. \label{eq:C1C2Def}
\end{align}
Algorithm~\ref{Alg:IBP} introduced in \cite{benamou2015iterative} consists in alternating projections to sets $\C_1$ and $\C_2$ w.r.t. Kullback--Leibler divergence, and is a generalization of Sinkhorn's algorithm and a particular case of Dykstra's projection algorithm. 

\begin{algorithm}[H]
    \caption{Iterative Bregman Projections, see \cite{benamou2015iterative}}
    \label{Alg:IBP}
    \begin{algorithmic}[1]
        \REQUIRE Cost matrices $C_1, \dots, C_m$, probability measures $p_1, \dots, p_m$, $\gamma > 0$, starting transport plans $\{\pi_l^0\}_{l=1}^m: \pi_l^0 := \exp\left(-\frac{C_l}{\gamma}\right)$, $l = 1, \dots, m$ 
        \REPEAT
            \IF{$t \bmod 2 = 0$}
                \STATE $\Bpi^{t + 1} := \argmin\limits_{\Bpi \in \C_1} \sum_{l = 1}^m w_l KL\left(\pi_l | \pi_l^t\right)$
            \ELSE
                \STATE $\Bpi^{t + 1} := \argmin\limits_{\Bpi \in \C_2} \sum_{l = 1}^m w_l KL\left(\pi_l | \pi_l^t\right)$
            \ENDIF
            \STATE $t := t + 1$
        \UNTIL{Converge}
        \ENSURE $\Bpi^t$    
    \end{algorithmic}
\end{algorithm}

As we will show below, this algorithm is equivalent to alternating minimization in dual problem of~\eqref{prob:reg_bary_2} presented in the next lemma.

\begin{lemma}
\label{lemma:dual_2}
    The dual problem of~\eqref{prob:reg_bary_2} is (up to a multiplicative constant)
    \begin{equation}
    \label{prob:dual_2}
        \min_{\substack{\u, \v\\ \sum_l w_l v_l = 0}} f(\u, \v) \quad \text{where} \quad f(\u, \v) := \sum_{l = 1}^m w_l \bigl\{\la \one, B_l(u_l, v_l) \one \ra - \la u_l, p_l \ra\bigr\},
    \end{equation}
    $\u = [u_1, \dots, u_m]$, $\v = [v_1, \dots, v_m]$, $u_l, v_l \in \R^n$, and
    \begin{equation}\label{eq:B_def}
        B_l(u_l, v_l) := \diag\left(e^{u_l}\right) K_l \diag\left(e^{v_l}\right), \quad
        K_l := \exp\left(-\frac{C_l}{\gamma}\right).
    \end{equation}
    Moreover, solution $\Bpi_\gamma^*$ to~\eqref{prob:reg_bary_2} is given by the formula
    \begin{equation}
    \label{eq:dual_to_primal}
        [\Bpi_\gamma^*]_l = B_l(u^*_l, v^*_l),
    \end{equation}
    where $(\u^*, \v^*)$ is a solution to the problem~\eqref{prob:dual_2}.
\end{lemma}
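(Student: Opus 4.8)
The plan is to obtain \eqref{prob:dual_2} by Lagrangian duality, applied to the equivalent reformulation of the primal in which the barycenter $q$ appears explicitly, with constraints $\pi_l\one=p_l$, $\pi_l^T\one=q$ and $\pi_l\in\R_+^{n\times n}$. First I would note that the simplex constraint on $q$ is redundant: any $\pi_l$ with $\pi_l\one=p_l$ and $\pi_l^T\one=q$ already forces $q\ge 0$ and $\one^T q=\one^T p_l=1$, so I may treat $q$ as a free variable in $\R^n$. Then I introduce multipliers $w_l\gamma\,u_l\in\R^n$ for $\pi_l\one=p_l$ and $w_l\gamma\,v_l\in\R^n$ for $\pi_l^T\one=q$ — this particular scaling by $w_l\gamma$ is chosen precisely so that the resulting formulas match $B_l$ and $K_l$ with no stray constants — form the Lagrangian, and compute the dual function by minimizing over $q$ and over the $\pi_l\ge 0$.

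Minimizing over the free variable $q$ is exactly what produces the constraint $\sum_l w_l v_l=0$: the only $q$-dependent part of the Lagrangian is $\gamma\la\sum_l w_l v_l,\,q\ra$, whose infimum over $q\in\R^n$ is $-\infty$ unless $\sum_l w_l v_l=0$, in which case it is $0$. Minimizing over each $\pi_l$ is a separable, strictly convex problem on $\R_+^{n\times n}$; the $\gamma H$ term makes $\partial/\partial(\pi_l)_{ij}$ of the Lagrangian, namely $w_l(C_l)_{ij}+w_l\gamma\ln(\pi_l)_{ij}-w_l\gamma(u_l)_i-w_l\gamma(v_l)_j$, tend to $-\infty$ at the boundary, so the minimizer is interior and the first-order condition gives $\pi_l=\diag(e^{u_l})K_l\diag(e^{v_l})=B_l(u_l,v_l)$. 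Substituting this back, using $\ln[B_l(u_l,v_l)]_{ij}=(u_l)_i+(v_l)_j-(C_l)_{ij}/\gamma$ to rewrite $\la\pi_l,C_l\ra+\gamma H(\pi_l)$ as $\gamma\la u_l,\pi_l\one\ra+\gamma\la v_l,\pi_l^T\one\ra-\gamma\la\one,\pi_l\one\ra$, and cancelling, the Lagrangian at its inner minimum reduces to $-\gamma\sum_l w_l\bigl\{\la\one,B_l(u_l,v_l)\one\ra-\la u_l,p_l\ra\bigr\}=-\gamma f(\u,\v)$. Thus the dual is $\max\{-\gamma f(\u,\v):\sum_l w_l v_l=0\}$, which is \eqref{prob:dual_2} up to the multiplicative constant $\gamma$.

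For the recovery formula \eqref{eq:dual_to_primal} I would invoke strong duality: the primal is a finite convex program over a nonempty compact feasible set, so $\Bpi_\gamma^*$ is attained, and (being essentially the minimization of a closed proper convex function over a polytope-type affine slice, with a relative-interior feasible point such as $\pi_l=p_l q^T$, $q$ interior to $S_n(1)$) it has zero duality gap with an attained dual optimum $(\u^*,\v^*)$. The KKT conditions then say $\Bpi_\gamma^*$ minimizes the Lagrangian at $(\u^*,\v^*)$, and by the uniqueness of that inner minimizer found above, $[\Bpi_\gamma^*]_l=B_l(u_l^*,v_l^*)$. I expect the only genuinely delicate point to be the bookkeeping of the $w_l$ and $\gamma$ weights — so that the constraint comes out exactly as $\sum_l w_l v_l=0$, the objective exactly as $f(\u,\v)$, and the "multiplicative constant" is correctly identified as $\gamma$; the convex-analytic facts (interior primal minimizer, zero gap, dual attainment) are routine here because of the entropic term, and one keeps the standing assumption $w_l>0$ (a zero weight just drops the $l$-th block from both problems).
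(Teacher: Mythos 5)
Your proposal is correct and takes essentially the same route as the paper: Lagrangian duality with the scaled multipliers $w_l\gamma u_l$, $w_l\gamma v_l$, explicit inner minimization over $\pi_l\in\R_+^{n\times n}$ yielding $\pi_l=B_l(u_l,v_l)$, and the reduced dual $-\gamma f(\u,\v)$, so the "multiplicative constant" is indeed $\gamma$. The only real difference is cosmetic: you keep the barycenter $q$ explicit and obtain $\sum_l w_l v_l=0$ from minimizing over the free variable $q$, whereas the paper dualizes the chained constraints $\pi_l^T\one=\pi_{l+1}^T\one$ and gets the same condition through the telescoping change of variables $v_l=(\mu_l-\mu_{l-1})/(w_l\gamma)$; your Slater/KKT justification of zero gap and recovery of $[\Bpi_\gamma^*]_l=B_l(u_l^*,v_l^*)$ is at least as careful as the paper's appeal to a min--max theorem (both tacitly need $p_l>0$ for dual attainment).
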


\begin{proof}
    The Lagrangian for~\eqref{prob:reg_bary_2} is equal to
    \begin{align*}
        L(\Bpi; \Blm, \Bmu) 
        & = \sum_{l = 1}^m w_l \bigl\{\la \pi_l, C_l \ra + \gamma H(\pi_l)\bigr\} + \sum_{l = 1}^m \la \lambda_l, \pi_l \one - p_l \ra + \sum_{l = 1}^m \la \mu_l, \pi_{l + 1}^T \one - \pi_l^T \one \ra \\
        & = \sum_{l = 1}^m \left[w_l \bigl\{\la \pi_l, C_l \ra + \gamma \la \pi_l, \ln \pi_l - \one \one^T \ra \bigr\} + \la \lambda_l, \pi_l \one - p_l \ra + \la \mu_{l - 1} - \mu_l, \pi_l^T \one \ra \right],
    \end{align*}
    where $\Blm = [\lambda_1, \dots, \lambda_m]$, $\Bmu = [\mu_1, \dots, \mu_m]$, $\lambda_l, \mu_l \in \R^n$ with convention $\mu_0 \equiv \mu_m \equiv 0$.
    Using the change of variables $u_l := - {\lambda_l}/{(w_l \gamma)}$ and $v_l := {(\mu_l - \mu_{l - 1})}/{(w_l \gamma)}$
    we obtain
    \begin{equation}
    \label{eq:lagrangian}
        L(\Bpi; \u, \v)
        = \gamma \sum_{l = 1}^m w_l \left\{\left\la \pi_l, \frac{C_l}{\gamma} + \ln \pi_l - \one \one^T - u_l \one^T - \one v_l^T \right\ra + \la u_l, p_l \ra \right\}.
    \end{equation}
    Notice that $\sum_{l = 1}^m w_l v_l = 0$ and this condition allows uniquely reconstruct $\mu_1, \dots, \mu_m$.
    Then by min-max theorem
    \begin{align*}
        \min_{\substack{\Bpi : \pi_l \in \R_+^{n \times n}\\ \pi_l \one = p_l,\; \pi_l^T \one = \pi_{l+1}^T \one}} 
        & \sum_{l = 1}^m w_l \bigl\{\la \pi_l, C_l \ra + \gamma H(\pi_l) \bigr\} \\
        & = \min_{\Bpi : \pi_l \in \R_+^{n \times n}} \max_{\substack{\u, \v\\ \sum_l w_l v_l = 0}} L(\Bpi; \u, \v) 
        = \max_{\substack{\u, \v\\ \sum_l w_l v_l = 0}} \min_{\Bpi : \pi_l \in \R_+^{n \times n}} L(\Bpi; \u, \v) \\
        & = \max_{\substack{\u, \v\\ \sum_l w_l v_l = 0}} \gamma \sum_{l = 1}^m 
        w_l \left\{\min_{\pi_l \in \R_+^{n \times n}} \left\la \pi_l, \frac{C_l}{\gamma} + \ln \pi_l - \one \one^T - u_l \one^T - \one v_l^T \right\ra + \la u_l, p_l \ra\right\}.
    \end{align*}
    By straightforward computations and the definition \eqref{eq:B_def} of $B_l(u_l, v_l)$ we obtain
    \begin{multline*}
        \min_{\pi_l \in \R_+^{n \times n}} \left\la \pi_l, \frac{C_l}{\gamma} + \ln \pi_l - \one \one^T - u_l \one^T - \one v_l^T\right\ra \\
        = - \left\la \one, \exp\left(u_l \one^T - \frac{C_l}{\gamma} + \one v_l^T \right) \one \right\ra
        = - \left\la \one, B_l(u_l, v_l) \one \right\ra,
    \end{multline*}
    and the minimum is attained at point $\pi_l = B_l(u_l, v_l)$.
    Thus we have
    \begin{align*}
        \min_{\substack{\Bpi : \pi_l \in \R_+^{n \times n}\\ \pi_l \one = p_l,\; \pi_l^T \one = \pi_{l+1}^T \one}} \sum_{l = 1}^m w_l \bigl\{\la \pi_l, C_l \ra + \gamma H(\pi_l) \bigr\}
        & = \max_{\substack{\u, \v\\ \sum_l w_l v_l = 0}} \gamma \sum_{l = 1}^m w_l \left\{- \la \one, B_l(u_l, v_l) \one \ra + \la u_l, p_l \ra + 1\right\} \\
        & = - \gamma \min_{\substack{\u, \v\\ \sum_l w_l v_l = 0}} \sum_{l = 1}^m w_l \left\{\la \one, B_l(u_l, v_l) \one \ra - \la u_l, p_l \ra\right\}.
    \end{align*}
    Consequently, the dual problem to~\eqref{prob:reg_bary_2} is equivalent to~\eqref{prob:dual_2}, and solution to~\eqref{prob:reg_bary_2} has the form $[\Bpi^*_\gamma]_l = B_l(u^*_l, v^*_l)$.
\end{proof}

The following lemma shows that Algorithms~\ref{Alg:IBP} is equivalent to alternating minimization in dual problem~\eqref{prob:dual_2} (what is a general fact for Dykstra's algorithm).

\begin{lemma}
\label{thm:IBP_to_dual}
    Sequence $\{\Bpi^t\}_{t \ge 0}$ generated by Algorithm~\ref{Alg:IBP} has a form $\pi_l^t = B_l(u^t_l, v^t_l)$, where $B_l(\cdot, \cdot)$ is defined in \eqref{eq:B_def} and
    \begin{align}
        &\u^0 = \v^0 = 0, &&&\notag \\
    	&\u^{t+1} := \argmin_\u f(\u, \v^t), & \v^{t+1} := \v^t & & t \bmod 2 = 0, \label{eq:u_next} \\
    	&\v^{t+1} := \argmin_{\v : \sum_{l = 1}^m w_l v_l = 0} f(\u^t, \v), & \u^{t+1} := \u^t & & t \bmod 2 = 1. \label{eq:v_next}
    \end{align}
\end{lemma}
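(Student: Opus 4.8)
The plan is to prove that the Iterative Bregman Projections iterates $\Bpi^t$ can be written in the form $\pi_l^t = B_l(u^t_l, v^t_l)$ with the dual variables evolving by alternating minimization of $f$, by induction on $t$. The base case is immediate: $\pi_l^0 = \exp(-C_l/\gamma) = K_l = B_l(0,0)$, which matches $\u^0 = \v^0 = 0$. For the inductive step I would treat the two parities separately, and the key observation in each case is that the alternating \emph{projection} onto $\C_1$ or $\C_2$ with respect to the weighted $KL$-divergence has a closed form that amounts precisely to a diagonal rescaling of the current transport plans — i.e.\ it keeps us inside the Sinkhorn-type parametrization $B_l(u_l, v_l)$, and only the $u_l$'s (resp. only the $v_l$'s) change.

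Concretely, for $t$ even we must solve $\argmin_{\Bpi \in \C_1} \sum_l w_l KL(\pi_l | \pi_l^t)$. Since $\C_1$ decouples across $l$ (each constraint is just $\pi_l \one = p_l$), this is $m$ independent problems, each a row-normalization: the minimizer is $\pi_l^{t+1} = \diag(p_l / (\pi_l^t \one)) \pi_l^t$. Substituting $\pi_l^t = B_l(u_l^t, v_l^t)$ and using the definition of $B_l$, this is again of the form $B_l(u_l^{t+1}, v_l^t)$ with $u_l^{t+1} = u_l^t + \ln p_l - \ln(K_l e^{v_l^t} \odot e^{u_l^t}) \cdot(\text{appropriately})$; one checks by a short computation that this coincides with $\argmin_\u f(\u, \v^t)$, because $\partial f / \partial u_l = w_l(B_l(u_l,v_l)\one - p_l)$, so the first-order optimality condition is exactly $B_l(u_l,v_l)\one = p_l$, the defining property of $\C_1$. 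For $t$ odd we must solve $\argmin_{\Bpi \in \C_2} \sum_l w_l KL(\pi_l|\pi_l^t)$, where now the constraints couple the $\pi_l$ through the common marginal $q$: minimizing over $q \in S_n(1)$ and then over each $\pi_l$ gives the weighted-geometric-mean update $q = \prod_l ((\pi_l^t)^T\one)^{w_l} / Z$, and $\pi_l^{t+1} = \pi_l^t \diag(q/((\pi_l^t)^T\one))$. Again this is $B_l(u_l^t, v_l^{t+1})$; and the corresponding first-order conditions for $\argmin_{\v: \sum_l w_l v_l = 0} f(\u^t,\v)$ — namely $B_l(u_l,v_l)^T\one$ all equal, with the Lagrange multiplier for $\sum_l w_l v_l = 0$ fixing the common value — reproduce exactly the same update. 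Matching the normalization constant with the constraint $\sum_l w_l v_l = 0$ is where one uses the particular dual formulation of Lemma~\ref{lemma:dual_2}.

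The main obstacle, as usual with these equivalences, is bookkeeping rather than any deep difficulty: one has to be careful that the closed-form projection formulas genuinely preserve the \emph{additive} parametrization in $\ln u_l, \ln v_l$ (equivalently, the multiplicative $\diag$-rescaling structure), and that the Lagrange-multiplier normalization in the $\C_2$-projection (the weighted geometric mean of the column marginals) is exactly the one singled out by the extra constraint $\sum_l w_l v_l = 0$ in problem~\eqref{prob:dual_2}. I would therefore organize the proof as: (i) compute the closed form of the $\C_1$-projection and identify it with the $\u$-minimization of $f$ via first-order conditions; (ii) compute the closed form of the $\C_2$-projection, including the optimal $q$, and identify it with the constrained $\v$-minimization of $f$; (iii) conclude by induction. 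Strict convexity of $f$ in each block (for fixed other block) guarantees the $\argmin$'s are well-defined and the first-order conditions are sufficient, so matching stationarity conditions is enough to identify the updates.
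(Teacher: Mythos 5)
Your overall plan (induction, explicit closed forms for the two KL projections, then identification with the block minimizations of $f$ via first-order conditions) is a legitimate alternative to the paper's argument, which instead observes that $\ln \pi_l^t = -C_l/\gamma + u_l^t \one^T + \one (v_l^t)^T$ so that $\sum_l w_l KL(\pi_l|\pi_l^t)$ coincides, up to additive constants, with the Lagrangian $L(\Bpi;\u^t,\v^t)$, and then identifies each constrained projection with a block-dual minimization of $f$ without ever writing the closed forms. Your even-step ($\C_1$) part is correct: the projection is the row rescaling, it stays in the $B_l$ parametrization changing only $u_l$, and $\nabla_{u_l} f = w_l\left(B_l(u_l,v_l^t)\one - p_l\right)$ gives exactly the same update.

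The odd-step ($\C_2$) identification, however, contains a genuine gap. The stationarity conditions for $\min_{\v:\,\sum_l w_l v_l = 0} f(\u^t,\v)$ read $e^{v_l}\odot K_l^T e^{u_l^t} = q$ for a common vector $q$ (the multiplier of the linear constraint), and the constraint $\sum_l w_l v_l = 0$ then forces $\ln q = \sum_l w_l \ln K_l^T e^{u_l^t}$, i.e.\ $q$ is the \emph{unnormalized} weighted geometric mean of the column marginals — exactly line 3 of Algorithm~\ref{Alg:dual_IBP} and the formula $q^{t+1}=\exp\left(\sum_l w_l \ln q_l^t\right)$ in Lemma~\ref{lemma:IBP_explicit_step}. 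Your projection formula instead normalizes $q$ by $Z$ to force $q\in S_n(1)$; the two updates differ whenever the $q_l^t$ are not all equal (the geometric mean of probability vectors has total mass $<1$ in general), and with the normalized update the new dual variables would satisfy $\sum_l w_l v_l = -\tau\one \ne 0$, where $\tau$ is the multiplier of the normalization — so the constraint $\sum_l w_l v_l=0$ does not "fix the normalization constant" as you assert; it fixes it to be absent. The fix is to drop $Z$: the projection actually performed by IBP (Proposition 2 of Benamou et al., and the one consistent with the dual problem~\eqref{prob:dual_2}) is onto the set of plans with equal column marginals, without constraining that common marginal to the simplex (the requirement $q \in S_n(1)$ in \eqref{eq:C1C2Def} should be read in that relaxed sense for the intermediate iterates). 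With the unnormalized geometric mean your matching of first-order conditions goes through and the induction closes.
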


\begin{proof}
    Let us prove it by induction. For $t = 0$ it is obviously true. Assume it holds for some $t \ge 0$. Then
    \[
    KL\left(\pi_l | \pi_l^t\right)
    = H(\pi_l) - \la \pi_l, \ln \pi_l^t \ra + \la \pi_l^t, \one \one^T \ra
    = H(\pi_l) + \left\la \pi_l, \frac{C_l}{\gamma} - u_l^t \one^t - \one (v_l^t)^T \right\ra + \la \pi_l^t, \one \one^T \ra.
    \]
    Therefore, 
    \begin{gather*}
        \sum_{l = 1}^m w_l KL\left(\pi_l | \pi_l^t\right) \to \min_{\Bpi \in \C} \\
        \Longleftrightarrow \sum_{l = 1}^m w_l \left( \la C_l, \pi_l \ra - \gamma \la \pi_l \one - p_l, u_l^t \ra - \gamma \la \pi_l^T \one, v_l^t \ra \right) = L(\Bpi; \u^t, \v^t) \to \min_{\Bpi \in \C},
    \end{gather*}
    where $L$ comes from~\eqref{eq:lagrangian}.
    
    Thus for even $t$
    \[
    \Bpi^{t + 1} = \argmin_{\Bpi \in \C_1} L(\Bpi; \u^t, \v^t),
    \]
    Lagrangian for this problem has form $L(\Bpi, \u, \v^t)$, hence the dual problem is
    \[
    \gamma f(\u, \v^t) \to \min_{\v}.
    \]
    and as $\u^{t+1} = \argmin_{\u} f(\u, \v^t)$,
    \[
    \pi_l^{t + 1} = B_l(u^{t+1}_l, v^t_l). 
    \]
    Similarly, for odd $t$ 
    \[
    \Bpi^{t + 1} = \argmin_{\Bpi \in \C_2} L(\Bpi; \u^t, \v^t),
    \]
    with Lagrangian $L(\Bpi, \u^t, \v)$ and dual problem
    \[
    \gamma f(\u^t, \v) \to \min_{\v : \sum_{l = 1}^m w_l v_l = 0}.
    \]
    Consequently,
    \[
    \pi_l^{t + 1} = B_l(u^{t+1}_l, v^t_l). 
    \]
\end{proof}

The next lemma gives us explicit recurrent expressions for $\u^t$ and $\v^t$ defined in the previous lemma. Equation~\eqref{eq:u_cond} immediately follows from \cite[Proposition 1]{benamou2015iterative}, and equation~\eqref{eq:v_cond} is a reformulation of \cite[Proposition 2]{benamou2015iterative}.

\begin{lemma}
\label{lemma:IBP_explicit_step}
    Equation~\eqref{eq:u_next} for even $t$ is equivalent to
    \begin{equation}
    \label{eq:u_cond}
        u^{t+1}_l 
        = u^t_l + \ln p_l - \ln \left(B_l\left(\u^t, \v^t\right) \one\right) 
        = \ln p_l - \ln K_l e^{v_l^t},
    \end{equation}
    and equation~\eqref{eq:v_next} for odd $t$ is equivalent to
    \begin{equation}
    \label{eq:v_cond}
        v^{t + 1}_l
        = v^t_l + \ln q^{t + 1} - \ln q^t_l 
        = \sum_{k = 1}^m w_k \ln K_k^T e^{u_k^t} - \ln K_l^T e^{u_l^t},
    \end{equation}
    where $q^t_l := B_l^T(u^t_l, v^t_l) \one$, $q^{t + 1} := \exp\left(\sum_{l = 1}^m w_l \ln q^t_l\right)$.
\end{lemma}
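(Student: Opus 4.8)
The plan is to exploit the separable structure of the dual objective $f(\u,\v)$ from Lemma~\ref{lemma:dual_2} and to solve each of the two partial minimization problems \eqref{eq:u_next} and \eqref{eq:v_next} in closed form, by setting the relevant gradient to zero (introducing a single Lagrange multiplier for the linear constraint in the second case) and using convexity to conclude the stationary point is the minimizer. Since $K_l = \exp(-C_l/\gamma)$ has strictly positive entries, the candidate stationary points are automatically positive, so minimizing over all of $\R^n$ (rather than over a positivity-constrained set) is legitimate.

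First I would treat the $\u$-step (even $t$). As $u_l$ enters $f(\u,\v^t)$ only through the $l$-th summand $w_l\{\la \one, B_l(u_l,v_l^t)\one\ra - \la u_l, p_l\ra\}$, the minimization decouples over $l$. Writing $B_l(u_l,v_l^t)\one = e^{u_l}\odot(K_l e^{v_l^t})$, the $i$-th partial derivative in $u_l$ is $e^{u_{l,i}}(K_l e^{v_l^t})_i - p_{l,i}$; setting it to zero gives $u_{l,i}^{t+1} = \ln p_{l,i} - \ln(K_l e^{v_l^t})_i$, which is the second equality of \eqref{eq:u_cond}. The first equality of \eqref{eq:u_cond} follows by observing that $\ln(B_l(\u^t,\v^t)\one) = u_l^t + \ln(K_l e^{v_l^t})$, so that $u_l^t + \ln p_l - \ln(B_l(\u^t,\v^t)\one) = \ln p_l - \ln(K_l e^{v_l^t})$.

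Next I would handle the $\v$-step (odd $t$), where we minimize $f(\u^t,\v)$ subject to $\sum_l w_l v_l = 0$. Here $v_l$ appears only in $w_l\la \one, B_l(u_l^t,v_l)\one\ra = w_l\la e^{v_l}, K_l^T e^{u_l^t}\ra$, so forming the Lagrangian with a multiplier $\nu \in \R^n$ and differentiating in $v_{l,j}$ gives $w_l e^{v_{l,j}}(K_l^T e^{u_l^t})_j + w_l\nu_j = 0$. Hence $e^{v_{l,j}}$ is proportional to $1/(K_l^T e^{u_l^t})_j$ with an $l$-independent factor; imposing $\sum_l w_l v_{l,j} = 0$ and using $\sum_l w_l = 1$ pins that factor down and yields $v_l^{t+1} = \sum_k w_k\ln K_k^T e^{u_k^t} - \ln K_l^T e^{u_l^t}$, the second equality of \eqref{eq:v_cond}. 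For the first equality I would substitute $\ln q_l^t = v_l^t + \ln K_l^T e^{u_l^t}$ and $\ln q^{t+1} = \sum_k w_k \ln q_k^t$ into $v_l^t + \ln q^{t+1} - \ln q_l^t$ and simplify.

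The computations are routine; the only point requiring care is reconciling the two expressions in \eqref{eq:v_cond}, which needs the invariant $\sum_k w_k v_k^t = 0$ at every odd $t$ — true because along the odd iterations the current $\v^t$ is either the zero vector or the output of the previous constrained $\v$-minimization. Since \eqref{eq:u_cond} and \eqref{eq:v_cond} are precisely \cite[Proposition 1]{benamou2015iterative} and a reformulation of \cite[Proposition 2]{benamou2015iterative}, one could alternatively quote those statements and merely translate to the present notation.
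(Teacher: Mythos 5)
Your proof is correct, but it takes a different route from the paper: the paper does not derive \eqref{eq:u_cond}--\eqref{eq:v_cond} at all, it simply invokes Propositions 1 and 2 of \cite{benamou2015iterative} (translated into the present notation), whereas you derive both updates directly from the dual objective $f(\u,\v)$ of Lemma~\ref{lemma:dual_2}. Your computations check out: the $\u$-step decouples over $l$, the stationarity condition $e^{u_{l,i}}(K_l e^{v_l^t})_i = p_{l,i}$ gives the second equality in \eqref{eq:u_cond}, and strict convexity of $u_l \mapsto \la e^{u_l}, K_l e^{v_l^t}\ra - \la u_l, p_l\ra$ makes this the unique minimizer; for the $\v$-step the KKT system with one multiplier for the affine constraint $\sum_l w_l v_l = 0$ yields $e^{v_{l,j}} \propto 1/(K_l^T e^{u_l^t})_j$ with an $l$-independent factor, which the constraint and $\sum_l w_l = 1$ determine, giving \eqref{eq:v_cond}; and your observation that $\sum_l w_l v_l^t = 0$ at every odd $t$ (true since $\v^t$ is either $\v^0=0$ or the output of an earlier constrained $\v$-minimization) is exactly what is needed to reconcile the two expressions in \eqref{eq:v_cond}. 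What your approach buys is self-containedness and an explicit check that the updates are consistent with the modified dual of Lemma~\ref{lemma:dual_2} (which differs from the original IBP formulation by the extra constraint $\sum_l w_l v_l = 0$); what the paper's citation buys is brevity, at the cost of leaving that translation implicit.
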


Now we turn to the proof of Theorem~\ref{thm:complexity_IBP}.
Next lemmas are preliminaries to the proof of correctness and complexity bound for Algorithm~\ref{Alg:dual_IBP}.

\begin{lemma}
\label{lemma:bounds_u_v}
    For any $t \ge 0$, $l = \overline{1, m}$ it holds
    \begin{align*}
        &\max_j [v_l^t]_j - \min_j [v_l^t]_j \le R_v,
        &\max_j [v_l^*]_j - \min_j [v_l^*]_j \le R_v,
    \end{align*}
    where 
    \begin{gather}
        R_v := \max_l \frac{\norm{C_l}_\infty}{\gamma} + \sum_{k = 1}^m w_k \frac{\norm{C_k}_\infty}{\gamma}.
    \end{gather}
\end{lemma}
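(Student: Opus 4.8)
The plan is to bound the ``span'' $\mathrm{sp}(x):=\max_j x_j-\min_j x_j$ of the vectors appearing in the closed-form recursion~\eqref{eq:v_cond}, using two elementary ingredients: a uniform bound on $\mathrm{sp}\bigl(\ln(K_l^T e^{x})\bigr)$ valid for \emph{every} $x\in\R^n$, and the subadditivity of $\mathrm{sp}(\cdot)$ under nonnegative linear combinations. Neither ingredient uses anything about the iterates beyond the explicit formulas already established, so the same argument will cover both $\v^t$ and $\v^*$.

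The first and central step is the estimate $\mathrm{sp}\bigl(\ln(K_l^T e^{x})\bigr)\le\norm{C_l}_\infty/\gamma$ for all $l$ and all $x\in\R^n$. I would prove it directly: $[K_l^T e^{x}]_j=\sum_i [K_l]_{ij}e^{x_i}>0$, and for any indices $j,j'$ one has $[K_l^T e^{x}]_j=\sum_i\frac{[K_l]_{ij}}{[K_l]_{ij'}}[K_l]_{ij'}e^{x_i}\le\bigl(\max_i\frac{[K_l]_{ij}}{[K_l]_{ij'}}\bigr)[K_l^T e^{x}]_{j'}$, and since $\frac{[K_l]_{ij}}{[K_l]_{ij'}}=\exp\!\bigl(\frac{[C_l]_{ij'}-[C_l]_{ij}}{\gamma}\bigr)\le\exp(\norm{C_l}_\infty/\gamma)$ (because $0\le[C_l]_{ij}$ and $[C_l]_{ij'}\le\norm{C_l}_\infty$), taking logarithms and maximizing over $j,j'$ gives the claim.

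Next I would note that $\mathrm{sp}(y)=\max_j y_j+\max_j(-y_j)$ is a sum of two sublinear functionals, hence $\mathrm{sp}\bigl(\sum_k\alpha_k y_k\bigr)\le\sum_k\alpha_k\,\mathrm{sp}(y_k)$ for $\alpha_k\ge0$, and $\mathrm{sp}(-y)=\mathrm{sp}(y)$. Substituting the explicit expression $v_l^{t+1}=\sum_{k=1}^m w_k\ln K_k^T e^{u_k^t}-\ln K_l^T e^{u_l^t}$ from~\eqref{eq:v_cond} then yields immediately
\[
\mathrm{sp}(v_l^{t+1})\le\sum_{k=1}^m w_k\,\mathrm{sp}\bigl(\ln K_k^T e^{u_k^t}\bigr)+\mathrm{sp}\bigl(\ln K_l^T e^{u_l^t}\bigr)\le\sum_{k=1}^m w_k\frac{\norm{C_k}_\infty}{\gamma}+\max_l\frac{\norm{C_l}_\infty}{\gamma}=R_v.
\]
Since $v_l^0=0$ has span $0$ and the $\u$-updates of Algorithm~\ref{Alg:dual_IBP} leave $\v$ unchanged, a one-line induction on $t$ propagates $\mathrm{sp}(v_l^t)\le R_v$ to all $t\ge0$ and all $l$.

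For the optimal variable, I would use that $\v^*$ is the constrained minimizer $\argmin_{\v:\,\sum_l w_l v_l=0}f(\u^*,\v)$, whose closed form is, by Lemma~\ref{lemma:IBP_explicit_step}, exactly $v_l^*=\sum_{k=1}^m w_k\ln K_k^T e^{u_k^*}-\ln K_l^T e^{u_l^*}$; the identical chain of inequalities then gives $\mathrm{sp}(v_l^*)\le R_v$. Everything here is routine; the only point requiring genuine care is the span estimate of the second paragraph (equivalently, controlling the column-to-column ratios of $K_l=\exp(-C_l/\gamma)$ by $\exp(\norm{C_l}_\infty/\gamma)$), and, for the claim about $v_l^*$, invoking the explicit form of the constrained minimizer from Lemma~\ref{lemma:IBP_explicit_step} rather than re-deriving first-order conditions by hand.
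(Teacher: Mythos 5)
Your proposal is correct and follows essentially the same route as the paper: the heart of the argument in both cases is the uniform span bound $\max_j[\ln K_l^T e^{x}]_j-\min_j[\ln K_l^T e^{x}]_j\le\norm{C_l}_\infty/\gamma$, combined with the explicit update~\eqref{eq:v_cond} and the fact that the optimal $\v^*$ satisfies the same relation. The only (immaterial) difference is how that span bound is obtained — you control column-to-column ratios of $K_l$, while the paper sandwiches $[K_l]_{ij}$ between $e^{-\norm{C_l}_\infty/\gamma}$ and $1$ — both yielding the same constant $R_v$.
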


\begin{proof}
    Bound can be derived in almost the same way as in~\cite{dvurechensky2018computational}. For $t = 0$ it obviously holds. Let us denote by $\nu_l$ the minimal entry of $K_l$:
    \[
    \nu_l := \min_{i,j} [K_l]_{i j} = e^{-\norm{C_l}_\infty / \gamma}.
    \]
    As $[K_l]_{i j} \le 1$, we obtain for all $j = \overline{1,n}$
    \[
    \ln \nu_l + \ln \la \one, e^{u_l} \ra
    \le [\ln K_l^T e^{u_l}]_j 
    \le \ln \la \one, e^{u_l} \ra,
    \]
    therefore
    \[
    \max_j [\ln K_l^T e^{u_l}]_j - \min_j [\ln K_l^T e^{u_l}]_j
    \le - \ln \nu_l
    = \frac{\norm{C_l}_\infty}{\gamma}.
    \]
    Hence at any update of $\v^t$ it holds
    \[
    \max_j [v_l^t]_j - \min_j  [v_l^t]_j
    \le \frac{\norm{C_l}_\infty}{\gamma} + \sum_{k = 1}^m w_k \frac{\norm{C_k}_\infty}{\gamma}
    \le R_v.
    \]
    For solution to the problem $(\u^\ast, \v^\ast)$ condition \eqref{eq:v_cond} also holds, and consequently the solution also meets derived bounds.
\end{proof}

Let us define an excess function 
\[\tilde{f}(\u, \v) := f(\u, \v) - f(\u^\ast, \v^\ast)\]
for further complexity analysis of Algorithm~\ref{Alg:dual_IBP}.

\begin{lemma}
\label{lemma:bound_excess}
    Let $\{\u^t, \v^t\}_{t \ge 0}$ be generated by Algorithm~\ref{Alg:dual_IBP}. Then for any even $t \ge 2$ we have
    \begin{equation}
        \tilde{f}(\u^t, \v^t) 
        \le R_v \sum_{l = 1}^m w_l \norm{q^t_l - \bar{q}^t}_1.
    \end{equation}
\end{lemma}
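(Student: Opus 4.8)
\emph{Proof proposal.} The plan is to bound the excess $\tilde f(\u^t,\v^t)$ via the gradient inequality for the convex function $f$, exploit two identities that the iterates satisfy right after an even (i.e., $u$-) step of Algorithm~\ref{Alg:dual_IBP}, and then finish with Hölder's inequality combined with an optimal constant shift of the dual variable $v_l$.

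\textbf{Structural identities at even $t$.} First I would record that for even $t\ge 2$ the last executed update is line~5, i.e. $u^t_l=\ln p_l-\ln K_l e^{v^t_l}$ with $\v^t=\v^{t-1}$; hence, entrywise, $B_l(u^t_l,v^t_l)\one=e^{u^t_l}\odot(K_l e^{v^t_l})=p_l$ for every $l$, so the row-marginal constraint holds exactly. Consequently $\la\one,q^t_l\ra=\la\one,B_l(u^t_l,v^t_l)^T\one\ra=\la B_l(u^t_l,v^t_l)\one,\one\ra=\la p_l,\one\ra=1$, and therefore $\la\one,\bar q^t\ra=\sum_l w_l\la\one,q^t_l\ra=1$, so that $\la\one,\,q^t_l-\bar q^t\,\ra=0$ for every $l$. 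Also $\v^t=\v^{t-1}$ was produced by line~3 (or equals $\v^0=0$), and a one-line computation using $\sum_l w_l=1$ shows that line~3 always returns a vector with $\sum_l w_l v^t_l=0$; the constraint in problem~\eqref{prob:dual_2} gives $\sum_l w_l v^*_l=0$ as well.

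\textbf{Gradient inequality.} Since $f$ is convex (a sum of log-sum-exp--type terms $\la\one,B_l(u_l,v_l)\one\ra$ minus a linear part),
\[
\tilde f(\u^t,\v^t)=f(\u^t,\v^t)-f(\u^*,\v^*)\le \la\nabla f(\u^t,\v^t),\,(\u^t,\v^t)-(\u^*,\v^*)\ra .
\]
Now $\nabla_{u_l}f(\u,\v)=w_l\bigl(B_l(u_l,v_l)\one-p_l\bigr)$ and $\nabla_{v_l}f(\u,\v)=w_l B_l(u_l,v_l)^T\one=w_l q_l$. By the row-marginal identity above the $u$-block of $\nabla f(\u^t,\v^t)$ vanishes, so the right-hand side collapses to $\sum_{l=1}^m w_l\la q^t_l,\,v^t_l-v^*_l\ra$. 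Subtracting $\la\bar q^t,\sum_l w_l(v^t_l-v^*_l)\ra=0$ (using $\sum_l w_l v^t_l=\sum_l w_l v^*_l=0$) rewrites this as $\sum_{l=1}^m w_l\la q^t_l-\bar q^t,\,v^t_l-v^*_l\ra$.

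\textbf{Hölder with a shift, and the main obstacle.} Because $\la\one,q^t_l-\bar q^t\ra=0$, the quantity $\la q^t_l-\bar q^t,\,v^t_l-v^*_l\ra$ is invariant under replacing $v^t_l-v^*_l$ by $v^t_l-v^*_l-c\one$ for any $c\in\R$. Taking $c=\tfrac12\bigl(\max_j(v^t_l-v^*_l)_j+\min_j(v^t_l-v^*_l)_j\bigr)$ and applying Hölder's inequality,
\[
\la q^t_l-\bar q^t,\,v^t_l-v^*_l\ra\le \tfrac12\Bigl(\max_j(v^t_l-v^*_l)_j-\min_j(v^t_l-v^*_l)_j\Bigr)\,\norm{q^t_l-\bar q^t}_1\le R_v\,\norm{q^t_l-\bar q^t}_1,
\]
where the last inequality uses that the oscillation of $v^t_l-v^*_l$ is at most the sum of the oscillations of $v^t_l$ and of $v^*_l$, each bounded by $R_v$ by Lemma~\ref{lemma:bounds_u_v}. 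Summing with weights $w_l$ gives the claim. The only delicate point --- and the reason the estimate carries a single factor $R_v$ rather than $2R_v$ --- is precisely this last step: it requires noticing that $q^t_l-\bar q^t$ is orthogonal to $\one$ (which hinges on the exactness of the row marginal after an even step) and then using the optimal constant shift so that only half of the oscillation of $v^t_l-v^*_l$ enters the bound.
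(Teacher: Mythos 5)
Your proof is correct and follows essentially the same route as the paper's: the gradient inequality for the convex dual $f$, the vanishing of the $u$-block after an even step (so $B_l\one=p_l$), the orthogonality $\la \one, q^t_l-\bar q^t\ra=0$ together with $\sum_l w_l v^t_l=\sum_l w_l v^*_l=0$, and a constant shift plus H\"older with Lemma~\ref{lemma:bounds_u_v}. The only cosmetic difference is that you center the single vector $v^t_l-v^*_l$ by its midpoint, while the paper centers $v^t_l$ and $v^*_l$ separately by $b^t_l$ and $b^*_l$; both yield the same factor $R_v$.
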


\begin{proof}
    Gradient inequality of any convex function $g$ at point $x^\ast$ reads as
    \[
    g(x^\ast) \ge g(x) + \la \nabla g(x), x^\ast - x \ra, \quad \forall x \in \operatorname{dom}(g).
    \]
    Applying the latter inequality to function $f$ at point $(\u^\ast, \v^\ast)$ we obtain
    \begin{align*}
        \tilde{f}(\u^t, \v^t)
        = f(\u^t, \v^t) - f(\u^\ast, \v^\ast) 
        \le \sum_{l = 1}^m w_l \la u^t_l - u^\ast_l, B_l(u^t_l, v^t_l) \one - p_l \ra + \sum_{l = 1}^m w_l \la v^t_l - v_l^\ast, q^t_l \ra.
    \end{align*}
    
    If $t \ge 2$ is even, then the first term in r.h.s.\ vanishes. Notice that $\la q^t_l, \one \ra = \la \one, B_l(u^t_l, v^t_l) \one \ra = \la \one, p_l \ra = 1$, thus
    \begin{align*}
        \tilde{f}(\u^t, \v^t)
        & = \sum_{l = 1}^m w_l \la v^t_l - v_l^*, q^t_l \ra
        = \sum_{l = 1}^m w_l \la v^t_l - v^*_l, q^t_l - \bar{q}^t \ra \\
        & = \sum_{l = 1}^m w_l \la (v^t_l - b^t_l \one) - (v^*_l - b^*_l \one), q^t_l - \bar{q}^t \ra \\
        & \le \sum_{l = 1}^m w_l \left(\norm{v^t_l - b^t_l \one}_\infty + \norm{v^*_l - b^*_l \one}_\infty\right) \norm{q^t_l - \bar{q}^t}_1,
    \end{align*}
    where 
    \[
    b_l^t := \frac{\min_i [v_l^t]_i + \max_i [v_l^t]_i}{2}, \quad
    b_l^* := \frac{\min_i [v_l^*]_i + \max_i [v_l^*]_i}{2}.
    \]
    By Lemma~\ref{lemma:bounds_u_v} $\norm{v^t_l - b^t_l \one}_\infty \le R_v / 2$ and $\norm{v^*_l - b^*_l \one}_\infty \le R_v / 2$, therefore
    \[
    \tilde{f}(\u^t, \v^t)
    \le R_v \sum_{l = 1}^m w_l \norm{q^t_l - \bar{q}^t}_1.
    \]
\end{proof}

\begin{lemma}
\label{lemma:IBP_decrease}
    For any odd $t \ge 1$ the following bound on the change of objective function $f(\cdot, \cdot)$ holds:
    \[
    f(\u^t, \v^t) - f(\u^{t+1}, \v^{t+1})
    \ge \frac{1}{11} \sum_{l = 1}^m w_l \norm{q_l^t - \bar{q}^t}_1^2,
    \]
    where $\bar{q}^t := \sum_{l = 1}^m w_l q^t_l$.
\end{lemma}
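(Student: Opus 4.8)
The plan is to compute the one‑step decrease exactly and then bound it from below by a generalized Pinsker inequality. Since $t$ is odd, the step from $t$ to $t+1$ is the exact minimization over $\v$ with $\sum_l w_l v_l = 0$, keeping $\u^{t+1}=\u^t$ (Lemma~\ref{thm:IBP_to_dual}). I would first record two facts: (i) at an odd step the row constraints are already satisfied, $B_l(u^t_l,v^t_l)\one = p_l$, since they were enforced by the preceding (even, i.e.\ $\u$‑)update, at which $\v$ was not changed; hence $\la q^t_l,\one\ra = 1$ for every $l$; and (ii) by Lemma~\ref{lemma:IBP_explicit_step} the updated column marginals all coincide with the weighted geometric mean, $q^{t+1}_l = q^{t+1} := \exp\!\bigl(\sum_k w_k \ln q^t_k\bigr)$ for every $l$. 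As only the terms $\la\one,B_l(u_l,v_l)\one\ra = \la q_l,\one\ra$ depend on $\v$, this gives
\[
f(\u^t,\v^t)-f(\u^{t+1},\v^{t+1})=\sum_{l=1}^m w_l\bigl(\la q^t_l,\one\ra-\la q^{t+1}_l,\one\ra\bigr)=1-\la q^{t+1},\one\ra=:\Delta .
\]
By the weighted AM--GM inequality $[q^{t+1}]_j=\prod_l [q^t_l]_j^{w_l}\le\sum_l w_l [q^t_l]_j=[\bar{q}^t]_j$ entrywise, so $\Delta=\norm{\bar{q}^t-q^{t+1}}_1$; and $\la q^{t+1},\one\ra\ge 0$ gives $\Delta\le1$.

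Next I would re-express the very same $\Delta$ as a weighted sum of generalized Kullback--Leibler divergences (with $KL(a|b):=\sum_j\bigl(a_j\ln\tfrac{a_j}{b_j}-a_j+b_j\bigr)$, as in the Notation). Using $\sum_l w_l\ln [q^t_l]_j=\ln [q^{t+1}]_j$ and $\la q^t_l,\one\ra=1$,
\[
\sum_{l=1}^m w_l\, KL\bigl(q^{t+1}\,\big|\,q^t_l\bigr)=\sum_{l=1}^m w_l\Bigl(\sum_{j}[q^{t+1}]_j\ln\tfrac{[q^{t+1}]_j}{[q^t_l]_j}-\la q^{t+1},\one\ra+\la q^t_l,\one\ra\Bigr)=1-\la q^{t+1},\one\ra=\Delta .
\]
Since $q^t_l$ is a probability vector and $\norm{q^{t+1}}_1\le1$, a version of Pinsker's inequality for a subprobability against a probability, namely $KL(P\,|\,Q)\ge\tfrac14\norm{P-Q}_1^2$ whenever $\norm{Q}_1=1\ge\norm{P}_1$, gives $\norm{q^{t+1}-q^t_l}_1^2\le4\,KL(q^{t+1}|q^t_l)$ and hence $\sum_{l}w_l\norm{q^{t+1}-q^t_l}_1^2\le4\Delta$.

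It remains to combine the two estimates. By the triangle inequality, $\norm{q^{t+1}-\bar{q}^t}_1=\Delta$, and $(a+b)^2\le2a^2+2b^2$,
\[
\sum_{l=1}^m w_l\norm{q^t_l-\bar{q}^t}_1^2\le\sum_{l=1}^m w_l\bigl(2\norm{q^t_l-q^{t+1}}_1^2+2\Delta^2\bigr)\le8\Delta+2\Delta^2\le10\,\Delta ,
\]
where the last step uses $\Delta\le1$. Therefore $\Delta\ge\tfrac1{10}\sum_l w_l\norm{q^t_l-\bar{q}^t}_1^2\ge\tfrac1{11}\sum_l w_l\norm{q^t_l-\bar{q}^t}_1^2$, as claimed. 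The only step that is not pure bookkeeping is the generalized Pinsker inequality, which I expect to be the main point to verify; the cleanest way is to write $P=p\hat P$ with $p=\norm P_1\le1$ and $\hat P$ a probability vector, use $KL(P|Q)=(p\ln p-p+1)+p\,KL(\hat P|Q)$ together with $p\ln p-p+1\ge\tfrac12(1-p)^2$, ordinary Pinsker $KL(\hat P|Q)\ge\tfrac12\norm{\hat P-Q}_1^2$, and $\norm{P-Q}_1\le p\norm{\hat P-Q}_1+(1-p)$. The constant obtained this way is not tight, which is why the statement claims only $\tfrac1{11}$.
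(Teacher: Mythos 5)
Your proof is correct, and it takes a genuinely different route to the key estimate than the paper. Both arguments start identically: for odd $t$ the $\u$-block is unchanged and, since the preceding even step enforced $B_l(u_l^t,v_l^t)\one=p_l$, the exact decrease is $\Delta=1-\la q^{t+1},\one\ra=\norm{\bar q^t-q^{t+1}}_1$ with $q^{t+1}$ the entrywise weighted geometric mean of the $q_l^t$. From there the paper proves and applies a bespoke scalar quantitative AM--GM inequality, $\bar x-\prod_l x_l^{w_l}\ge\tfrac{4}{11}\sum_l w_l\bigl[(x_l-\bar x)^-\bigr]^2/\bar x$, entrywise, and then passes to $\norm{q_l^t-\bar q^t}_1^2$ via a Cauchy--Schwarz step and the identity $\sum_j[q_l^t-\bar q^t]_j^-=\tfrac12\norm{q_l^t-\bar q^t}_1$. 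You instead observe the clean identity $\Delta=\sum_l w_l\,KL(q^{t+1}\,|\,q_l^t)$ (with the unnormalized KL of the paper's notation), invoke a Pinsker-type inequality $KL(P|Q)\ge\tfrac14\norm{P-Q}_1^2$ valid when $P$ is a subprobability and $Q$ a probability vector, and finish with the triangle inequality and $\Delta\le 1$; your sketch of the generalized Pinsker step (splitting off $p\ln p-p+1\ge\tfrac12(1-p)^2$ and using ordinary Pinsker on the normalized part) checks out, and in fact yields the slightly sharper constant $\tfrac1{10}$, which of course implies the stated $\tfrac1{11}$. The paper's approach is fully self-contained and elementary; yours leans on a standard information-theoretic tool and exposes the structural reason the decrease controls the spread of the marginals, namely that the $\v$-step is a KL projection onto the geometric mean. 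One cosmetic remark: the equality $\norm{q^{t+1}-\bar q^t}_1=\Delta$ comes from the entrywise AM--GM domination you established earlier, not from the triangle inequality (which you only need for $\norm{q_l^t-\bar q^t}_1\le\norm{q_l^t-q^{t+1}}_1+\Delta$); this is a wording slip, not a gap.
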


\begin{proof}
    If $t$ is odd, then $\v^{t+1}$ satisfies~\eqref{eq:v_cond} and $\u^t = \u^{t+1}$. Therefore,
    \begin{align*}
    	f(\u^t, \v^t) - f(\u^{t+1}, \v^{t+1}) 
    	& = \sum_{l = 1}^m w_l \bigl\{\la q_l^t, \one \ra - \la q_l^{t + 1}, \one \ra\bigr\} 
    	= \left\la \bar{q}^t - q^{t + 1}, \one \right\ra \\
    	& = \left\la \bar{q}^t - \exp\left(\sum\limits_{l = 1}^m w_l q_l^t\right), \one \right\ra
    	\ge \frac{4}{11} \sum_{j = 1}^n \frac{1}{[\bar{q}^t]_j} \sum_{l = 1}^m w_l \bigl([q_l^t - \bar{q}^t]_j^-\bigr)^2 \\
    	& = \frac{4}{11} \sum_{l = 1}^m w_l \sum_{j = 1}^n \frac{\bigl([q_l^t - \bar{q}^t]_j^-\bigr)^2}{[\bar{q}^t]_j}
    	\ge \frac{4}{11} \sum_{l = 1}^m w_l \frac{\left(\sum_j [q_l^t - \bar{q}^t]_j^-\right)^2}{\sum_{j = 1}^n [\bar{q}^t]_j} \\
    	& = \frac{1}{11} \sum_{l = 1}^m w_l \norm{q_l^t - \bar{q}^t}_1^2.
    \end{align*}
    Here we used equations $\la q_l^t, \one \ra = \la \one, p_l \ra = 1$, i.e.\ $q_l^t \in S_n(1)$ and thus $\bar{q}^t \in S_n(1)$, and the following fact: if $x \in \R_+^m$, $\bar{x} := \sum_{l = 1}^m w_l x_l$, then
    \[
    \bar{x} - \prod_{l = 1}^m x_l^{w_l} 
    \ge \frac{4}{11} \sum_{l = 1}^m w_l \frac{\bigl[(x_l - \bar{x})^-\bigr]^2}{\bar{x}}.
    \]
    Indeed, let $\Delta_l := x_l - \bar{x}$, then
    \[
    \bar{x} - \prod_{l = 1}^m x_l^{w_l} 
    = \bar{x} - \exp\left\{\sum_{l = 1}^m w_l \ln(\bar{x} + \Delta_l)\right\}
    = \bar{x} \left(1 - \exp\left\{\sum_{l = 1}^m w_l \ln\left(1 + \frac{\Delta_l}{\bar{x}}\right)\right\}\right),
    \]
    \[
    \sum_{l = 1}^m w_l \ln\left(1 + \frac{\Delta_l}{\bar{x}}\right)
    \le \sum_{l = 1}^m w_l \left(\frac{\Delta_l}{\bar{x}} - \frac{(\Delta_l^-)^2}{2 \bar{x}^2}\right)
    = -\sum_{l = 1}^m w_l \frac{(\Delta_l^-)^2}{2 \bar{x}^2}.
    \]
    Notice that $\Delta_l^- := \max\{- \Delta_l, 0\} = \max\{\bar{x} - x_l, 0\} \le \bar{x}$, thus 
    $\sum_{l = 1}^m w_l \frac{(\Delta_l^-)^2}{\bar{x}^2} \le 1$
    and
    \[
    \exp\left\{-\frac{1}{2} \sum_{l = 1}^m w_l \frac{(\Delta_l^-)^2}{\bar{x}^2}\right\}
    \le 1 - \left(1 - e^{-1/2}\right) \sum_{l = 1}^m w_l \frac{(\Delta_l^-)^2}{\bar{x}^2} 
    \le 1 - \frac{4}{11} \sum_{l = 1}^m w_l \frac{(\Delta_l^-)^2}{\bar{x}^2}.
    \]
    Consequently,
    \begin{equation*}
        \bar{x} - \prod_{l = 1}^m x_l^{w_l} 
        \ge \frac{4}{11} \sum_{l = 1}^m w_l \frac{(\Delta_l^-)^2}{\bar{x}}.
    \end{equation*}
\end{proof}

\begin{proof}[Proof of Theorem~\ref{thm:complexity_IBP}]
    First, notice that 
    \[
    H\bigl(B_l(u_l, v_l)\bigr) = \la u_l, B_l(u_l, v_l) \one \ra + \la v_l, B_l^T(u_l, v_l) \one \ra - \la \one, B_l(u_l, v_l) \one \ra - \frac{1}{\gamma} \la C_l, B_l(u_l, v_l) \ra.
    \]
    Since $N$ is even, $B_l \one = p_l$ and therefore it holds
    \begin{align*}
        \sum_{l = 1}^m w_l \left(\left\la C_l, B_l \right\ra + \gamma H(B_l)\right) 
        & = \gamma \sum_{l = 1}^m w_l \left(\la u^N_l, B_l \one \ra + \la v^N_l, B_l^T \one \ra - \la \one, B_l \one \ra \right) \\
        & = - \gamma \sum_{l = 1}^m w_l \left(\la \one, B_l \one \ra - \la u^N_l, p_l \ra\right) + \gamma \sum_{l = 1}^m w_l \la v^N_l, q_l^N \ra \\
        & = - \gamma f(\u^N, \v^N) + \gamma \sum_{l = 1}^m w_l \la v^N_l, q_l^N - \bar{q}^N \ra.
    \end{align*}
    Now Lemmas~\ref{lemma:dual_2}, \ref{lemma:bounds_u_v}, \ref{lemma:bound_excess}, and stopping criterion yield
    \begin{align*}
        \sum_{l = 1}^m w_l \left(\left\la C_l, B_l \right\ra + \gamma H(B_l)\right) 
        & - \sum_{l = 1}^m w_l \left(\left\la C_l, \pi_l^* \right\ra + \gamma H(\pi^*_{\gamma, l})\right) \\
        & = \gamma \bigl(f(\u^*, \v^*) - f(\u^N, \v^N)\bigr) + \gamma \sum_{l = 1}^m w_l \la v^N_l, q_l^N - \bar{q}^N \ra \\
        & \le \gamma \sum_{l = 1}^m w_l \frac{R_v}{2} \norm{q_l^N - \bar{q}^N}_1
        \le \frac{\gamma R_v}{2} \e'
        \le \max_l \norm{C_l}_\infty \e'.
    \end{align*}

    Now let us prove the complexity bound. We will do it in two steps.

    \paragraph{1.} If $t \ge 2$ is even, then by Lemma~\ref{lemma:bound_excess}
    \[
	\tilde{f}(\u^t, \v^t) \le R_v \sum_{l = 1}^m w_l \norm{q_l^t - \bar{q}^t}_1
    \]
    and since stopping criterion is not fulfilled, 
    \[
    \sum_{l = 1}^m w_l \norm{q_l^t - \bar{q}^t}_1 > \e'.
    \]
    Inequality $\sum_{l = 1}^m w_l \norm{q_l^t - \bar{q}^t}_1^2 \ge \left(\sum_{l = 1}^m w_l \norm{q_l^t - \bar{q}^t}_1\right)^2$ together with Lemma~\ref{lemma:IBP_decrease} give us the following bound:
    \begin{align*}
        \tilde{f}(\u^t, \v^t) - \tilde{f}(\u^{t+1}, \v^{t+1}) 
        & = f(\u^t, \v^t) - f(\u^{t+1}, \v^{t+1}) \\
        & \ge \frac{1}{11} \max\left\{(\e')^2, \left(\frac{\tilde{f}(\u^t, \v^t)}{R_v}\right)^2\right\} 
        = \max\left\{\frac{(\e')^2}{11}, \frac{1}{11 R_v^2} \tilde{f}^2(\u^t, \v^t)\right\}.
    \end{align*}
    If $t$ is odd then we have at least $\tilde{f}(\u^{t+1}, \v^{t+1}) \le \tilde{f}(\u^t, \v^t)$.
    To simplify derivation we define
    \begin{align}
         \delta_t := \tilde{f}(\u^t, \v^t).
    \end{align}
    
    Now we have two possibilities to estimate number of iteration. The first one is based on inequalities
    \[
    \frac{1}{\delta_{t + 1}} \ge 
    \begin{cases}
        \frac{1}{\delta_t} + \frac{1}{11 R_v^2} \frac{\delta_t}{\delta_{t+1}} \ge \frac{1}{\delta_t} + \frac{1}{11 R_v^2}, & t \bmod 2 = 0,\\
        \frac{1}{\delta_t}, & t \bmod 2 = 1.
    \end{cases}
    \]
    Summation of these inequalities gives
    \[
    \frac{1}{\delta_t} \ge \frac{1}{\delta_1} + \frac{t - 2}{22 R_v^2}
    \]
    and hence
    \begin{equation}
    \label{eq:self_est}
        t \le 2 + 22 R_v^2 \left(\frac{1}{\delta_t} - \frac{1}{\delta_1}\right).
    \end{equation}
    
    The second estimate can be obtain from
    \[
    \delta_{t+1} \le 
    \begin{cases}
        \delta_t - \frac{(\e')^2}{11}, & t \bmod 2 = 0,\\
        \delta_t, & t \bmod 2 = 1.
    \end{cases}
    \]
    Similarly, summation of these inequalities gives
    \begin{align}
    \label{eq:eps_est}
        & \delta_t \ge \delta_t - \delta_{t + k} 
        \ge \frac{k - 1}{22} (\e')^2, 
        \\
        & k \le 1 + \frac{22 \delta_t}{(\e')^2}.
    \end{align}
    
    \paragraph{2.} To combine the two estimates~\eqref{eq:self_est} and~\eqref{eq:eps_est}, we consider a switching strategy parametrized by number $s \in (0, \delta_1)$. First $t$ iterations we use~\eqref{eq:self_est}, resulting in $\delta_t$ becomes below some $s$. Then, we use $s$ as a starting point and estimate the remaining number of iteration by~\eqref{eq:eps_est}. The quantity $s$ can be found from the minimization
    \[
    N = t + k 
    \le 4 + \frac{2 s}{(\e')^2} + 22 R_v^2 \left(\frac{1}{s} - \frac{1}{\delta_1}\right).
    \]
    Minimizing the r.h.s.\ of the latter inequality in $s$ leads to
    \begin{equation}
    \label{case1}
        N \le \min_{0 \le s \le \delta_1} \left\{4 + \frac{22 s}{(\e')^2} + 22 R_v^2 \left(\frac{1}{s} - \frac{1}{\delta_1}\right)\right\} 
        \le 4 + \frac{44 R_v}{\e'}.
    \end{equation}
    The last inequality is obtained by the substitution $s = R_v \e'$ that is the solution to the minimization problem.
    Of course, the switching strategy is impossible if $\delta_1 < s$. But in this case~\eqref{eq:eps_est} gives
    \begin{equation}
    \label{case2}
        N \le 2 + \frac{22 \delta_1}{(\e')^2} < 4 + \frac{44 R_v}{\e'}.
    \end{equation}
    In both cases~\eqref{case1} and~\eqref{case2} we have
    \[
    N \le 4 + \frac{44 R_v}{\e'} 
    = O\left(\frac{\max_l \norm{C_l}_\infty}{\gamma \e'}\right).
    \]
\end{proof}


\subsection{Approximating Non-regularized WB by IBP}

To find an approximate solution to the initial problem~\eqref{prob:unreg_bary} we apply Algorithm~\ref{Alg:dual_IBP} with a suitable choice of $\gamma$ and $\e'$ 
and average marginals $q_1, \dots, q_m$ with weights $w_l$, what leads to Algorithm~\ref{Alg:IBP_to_bary}.

\begin{algorithm}[H]
    \caption{Finding Wasserstein barycenter by IBP}
    \label{Alg:IBP_to_bary}
    \begin{algorithmic}[1]
        \REQUIRE {Accuracy $\e$; cost matrices $C_1, \dots, C_m$; marginals $p_1, \dots, p_m$}
        \STATE {Set  $\gamma := \frac{1}{4} \frac{\e}{\ln n}, \quad \e' :=  \frac{1}{4} \frac{\e}{\max_l \norm{C_l}_\infty}$}
        \STATE {Find $B_1 := B_1(u^t_1, v^t_1), \dots, B_m := B_m(u^t_m, v^t_m)$ by Algorithm~\ref{Alg:dual_IBP} with accuracy $\e'$}
        \STATE {$q := \tfrac{1}{\sum_{l = 1}^m w_l \la \one, B_l \one \ra} \sum_{l = 1}^m w_l B_l^T \one$}
        \ENSURE $q$
    \end{algorithmic}
\end{algorithm}

To present our final complexity bound for Algorithm~\ref{Alg:IBP_to_bary}, which calculates approximated non-regularized Wasserstein barycenter, we formulate the following auxiliary Algorithm~\ref{Alg:round} finding a projection to the feasible set. It is based on Algorithm~2 from \cite{altschuler2017near-linear}. Notice that the bound~\eqref{eq:proj_bound} follows immediately from the proof of \cite[Theorem~4]{altschuler2017near-linear}, although it was not stated explicitly.
\begin{algorithm}[H]
    \caption{Round to feasible solution}
    \label{Alg:round}
    \begin{algorithmic}[1]
        \REQUIRE {$B_1, \dots, B_m \in \R_+^{n \times n}$, $p_1, \dots, p_m \in S_n(1)$} 
        \STATE {$q := \tfrac{1}{\sum_{l = 1}^m w_l \la \one, B_l \one \ra} \sum_{l = 1}^m w_l B_l^T \one$}
        \STATE {Calculate $\check{B}_1, \dots, \check{B}_m$ by Algorithm~2 from~\cite{altschuler2017near-linear} s.t.\\
        \begin{equation}\label{eq:proj_bound}
            \check{B}_l \in \Pi(p_l, q), \quad
            \norm{\check{B}_l - B_l}_1 \le 2 \left(\sum_i [B_l \one - p_l]_i^+ + \sum_j [B_l^T \one - q]_j^+\right)
        \end{equation}}
        \ENSURE {$\check{B}_1, \dots, \check{B}_m$}
    \end{algorithmic}
\end{algorithm}

Next theorem presents complexity bound for Algorithm~\ref{Alg:IBP_to_bary}.
\begin{theorem}
\label{thm:complexity_IBP_to_bary}
    For given $\e$ Algorithm~\ref{Alg:IBP_to_bary} returns $q \in S_n(1)$ s.t.
    \[
    \sum_{l = 1}^m w_l \W(p_l, q) 
    - \sum_{l = 1}^m w_l \W(p_l, q^*) \le \e,
    \]
    where $q^*$ is a solution to non-regularized problem~\eqref{prob:unreg_bary}.
    It requires
    \[
    O\left(\left(\frac{\max_l \norm{C_l}_\infty}{\e}\right)^2 M_{m,n} \ln n + m n\right)
    \]
    arithmetic operations, where $M_{m,n}$ is a time complexity of one iteration of Algorithm~\ref{Alg:dual_IBP}.
\end{theorem}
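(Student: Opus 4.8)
The plan is to split the target error $\sum_l w_l\W(p_l,q)-\sum_l w_l\W(p_l,q^*)$ into a \emph{regularization bias}, governed by the choice $\gamma=\tfrac{\e}{4\ln n}$ in line~1 of Algorithm~\ref{Alg:IBP_to_bary}, and an \emph{optimization error} for the entropically regularized barycenter problem, governed by the accuracy $\e'=\tfrac{\e}{4\max_l\norm{C_l}_\infty}$ with which Algorithm~\ref{Alg:dual_IBP} is run; the arithmetic-operations count then follows by substituting these two values into Theorem~\ref{thm:complexity_IBP} and adding the cost of the final averaging, which I would dispose of first.

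\emph{Complexity.} Theorem~\ref{thm:complexity_IBP} gives that, with these $\gamma,\e'$, Algorithm~\ref{Alg:dual_IBP} terminates after $N=O\!\bigl(\tfrac{\max_l\norm{C_l}_\infty}{\gamma\e'}\bigr)=O\!\bigl((\tfrac{\max_l\norm{C_l}_\infty}{\e})^2\ln n\bigr)$ iterations, each costing $M_{m,n}$, which gives the first summand; line~3 only normalizes a weighted sum of the $m$ vectors $B_l^T\one$ already produced by the stopping test, which is $O(mn)$, the second summand. Since $N$ is even, the returned matrices satisfy $B_l\one=p_l$, so $\sum_l w_l\la\one,B_l\one\ra=1$, hence $q=\sum_l w_l B_l^T\one\in S_n(1)$, and the stopping test reads $\sum_l w_l\norm{B_l^T\one-q}_1\le\e'$ — facts I will use below.

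\emph{Correctness.} Write $F(q):=\sum_l w_l\W(p_l,q)$, $F_\gamma(q):=\sum_l w_l\W_\gamma(p_l,q)$, and let $q_\gamma^*$ minimize $F_\gamma$. I would use the decomposition
\[
F(q)-F(q^*)=\underbrace{\bigl(F(q)-F_\gamma(q)\bigr)}_{(\mathrm{I})}+\underbrace{\bigl(F_\gamma(q)-F_\gamma(q_\gamma^*)\bigr)}_{(\mathrm{II})}+\underbrace{\bigl(F_\gamma(q_\gamma^*)-F(q^*)\bigr)}_{(\mathrm{III})}.
\]
For $(\mathrm{III})$: optimality of $q_\gamma^*$ and $\W_\gamma\le\W$ pointwise (an optimal plan of \eqref{eq:OT} has total mass $1$, hence entropy $H\le-1$, so $\gamma H$ only lowers the value) give $F_\gamma(q_\gamma^*)\le F_\gamma(q^*)\le F(q^*)$, in fact $(\mathrm{III})\le-\gamma$. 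For $(\mathrm{I})$: for each $l$, $\W(p_l,q)-\W_\gamma(p_l,q)\le-\gamma H(\pi_l)\le\gamma(2\ln n+1)$ for the $\W_\gamma$-optimal plan $\pi_l$ (unit mass $\Rightarrow H(\pi_l)\ge-2\ln n-1$); together with $(\mathrm{III})\le-\gamma$ this is $\le2\gamma\ln n=\tfrac\e2$. For the optimization error $(\mathrm{II})$: by \eqref{regularized_residual} and Lemma~\ref{lemma:dual_2}, $\sum_l w_l(\la C_l,B_l\ra+\gamma H(B_l))$ exceeds the optimal value of \eqref{prob:reg_bary_2}, which equals $F_\gamma(q_\gamma^*)$, by at most $\max_l\norm{C_l}_\infty\e'$. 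To turn this into a bound on $F_\gamma(q)=\sum_l w_l\W_\gamma(p_l,q)$ I would feed the $B_l$ (which have $B_l\one=p_l$ but only $\sum_l w_l\norm{B_l^T\one-q}_1\le\e'$) to Algorithm~\ref{Alg:round}, obtaining $\check B_l\in\Pi(p_l,q)$ with $\norm{\check B_l-B_l}_1\le2\sum_j[B_l^T\one-q]_j^+=\norm{B_l^T\one-q}_1$ by \eqref{eq:proj_bound} (the $[B_l\one-p_l]^+$ term vanishes, and $B_l^T\one,q$ are both probability vectors). Then $\W_\gamma(p_l,q)\le\la C_l,\check B_l\ra+\gamma H(\check B_l)$, so $\la C_l,\check B_l\ra\le\la C_l,B_l\ra+\max_l\norm{C_l}_\infty\norm{\check B_l-B_l}_1$, and averaging with weights $w_l$ yields $(\mathrm{II})\le2\max_l\norm{C_l}_\infty\e'+\gamma\sum_l w_l(H(\check B_l)-H(B_l))$.

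\emph{Main obstacle.} The crux is the last term: $H$ is not Lipschitz, so $\norm{\check B_l-B_l}_1$ cannot be used verbatim for the $\gamma H$ contribution. I would control it either by the quantitative continuity of $t\mapsto t\ln t$ on $[0,1]$ (modulus $O(\delta\ln\tfrac1\delta)$), using concavity and Jensen over the $n^2$ entries to get $\gamma\sum_l w_l(H(\check B_l)-H(B_l))=O(\gamma\e'\ln\tfrac{n^2}{\e'})$, or more crudely by the one-sided estimates $H(\check B_l)\le-1$ and $H(B_l)\ge-2\ln n-1$, which folds an extra $O(\gamma\ln n)$ into the bias. The remaining, purely bookkeeping, step is to verify that with the explicit constants $\gamma=\tfrac{\e}{4\ln n}$, $\e'=\tfrac{\e}{4\max_l\norm{C_l}_\infty}$ all the $O(\gamma\ln n)$ and $O(\max_l\norm{C_l}_\infty\e')$ contributions add up to at most $\e$ — one may assume $\e\le\max_l\norm{C_l}_\infty$, else every $q$ is trivially $\e$-optimal. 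Combining $(\mathrm{I})$, $(\mathrm{II})$, $(\mathrm{III})$ gives the accuracy claim, and the complexity claim was established in the second paragraph.
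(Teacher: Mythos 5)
Your complexity paragraph and the rounding step are fine, but the accuracy part has a genuine gap, and it is exactly the "main obstacle" you flag: with your decomposition through $F_\gamma(q)$ the constants of Algorithm~\ref{Alg:IBP_to_bary} do not close. Track the entropy prices: in $(\mathrm{I})+(\mathrm{III})$ you pay $\gamma(2\ln n+1)-\gamma=2\gamma\ln n=\e/2$; in $(\mathrm{II})$ you pay $2\max_l\norm{C_l}_\infty\e'=\e/2$ plus the term $\gamma\sum_l w_l\bigl(H(\check B_l)-H(B_l)\bigr)$. With the crude one-sided bounds ($H(\check B_l)\le -1$, $H(B_l)\ge -2\ln n-1$) that last term is another $2\gamma\ln n=\e/2$, so the total is $3\e/2$, not $\e$. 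With the Fannes-type modulus $O\bigl(\e'\ln\frac{n^2}{\e'}\bigr)$ the extra term is small but still strictly positive, and since $\e/2+\e/2$ already saturates the budget, the bound still exceeds $\e$ (you cannot argue $H(\check B_l)\le H(B_l)$ in general, as the rounding of Algorithm~\ref{Alg:round} rescales entries and adds a rank-one correction, which has no definite sign effect on $H$). So the "purely bookkeeping" step is where the argument fails as stated; it could only be repaired by changing the constants in line~1 of Algorithm~\ref{Alg:IBP_to_bary} or by a sharper entropy comparison.

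The paper's proof avoids the obstacle entirely by never evaluating the regularized objective at the output $q$: since $\check B_l\in\Pi(p_l,q)$, it bounds the unregularized value directly, $\sum_l w_l\W(p_l,q)\le\sum_l w_l\la C_l,\check B_l\ra\le\sum_l w_l\la C_l,B_l\ra+\max_l\norm{C_l}_\infty\e'$, then uses \eqref{regularized_residual} together with the optimality of $\Bpi^*_\gamma$ for \eqref{prob:reg_bary_2} to replace $\pi^*_{\gamma,l}$ by the unregularized plans $\pi^*_l$, so the only entropy terms that ever appear are $\gamma\bigl(H(\pi^*_l)-H(B_l)\bigr)\le 2\gamma\ln n$. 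The entropy width $2\gamma\ln n=\e/2$ and the optimization/rounding cost $2\max_l\norm{C_l}_\infty\e'=\e/2$ are each paid exactly once, giving exactly $\e$, and the entropy of $\check B_l$ (and hence the non-Lipschitzness of $H$) never enters. I recommend you restructure your argument this way: keep your $(\mathrm{III})$-type comparison only in the form $\sum_l w_l(\la C_l,\pi^*_{\gamma,l}\ra+\gamma H(\pi^*_{\gamma,l}))\le\sum_l w_l(\la C_l,\pi^*_l\ra+\gamma H(\pi^*_l))$, and drop the passage through $F_\gamma(q)$ altogether.
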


\begin{Rem}
    As each iteration of Algorithm~\ref{Alg:dual_IBP} requires $m$ matrix-vector multiplications, the general bound is $M_{m,n} = O(m n^2)$. However, for some specific form of matrices $C_l$ it's possible to achieve better complexity, e.g.\ $M_{m,n} = O(m n \log n)$ via FFT \cite{peyre2018computational}.
\end{Rem}

\begin{proof}
    Let $\Bpi^*_\gamma = [\pi^*_{\gamma,1}, \dots, \pi^*_{\gamma,m}]$ be a solution to~\eqref{prob:reg_bary_2} and $\Bpi^* = [\pi_1^*, \dots, \pi_m^*]$ be a solution to the non-regularized problem. Consider $\check{B}_1, \dots, \check{B}_m$ obtained from $B_1(u^t_1, v^t_1), \dots, B_m(u^t_m, v^t_m)$ via Algorithm~\ref{Alg:round}, where $t$ is the number of IBP iterations. Then we obtain
    \[
    \sum_{l = 1}^m w_l \la C_l, \check{B}_l \ra 
    \le \sum_{l = 1}^m w_l \left(\la C_l, B_l(u^t_l, v^t_l) \ra + \norm{C_l}_\infty \norm{B_l(u^t_l, v^t_l) - \check{B}_l}_1\right).
    \]
    By Theorem~\ref{thm:complexity_IBP} one obtains
    \begin{align*}
        \sum_{l = 1}^m w_l \la C_l, B_l(u^t_l, v^t_l) \ra
        & \le \sum_{l = 1}^m w_l \left(\la C_l, \pi^*_{\gamma,l} \ra + \gamma H(\pi^*_{\gamma,l}) - \gamma H(B_l(u^t_l, v^t_l))\right) + \max_l \norm{C_l}_\infty \e' \\
        & \le \sum_{l = 1}^m w_l \left(\la C_l, \pi_l^* \ra + \gamma H(\pi^*_l) - \gamma H(B_l(u^t_l, v^t_l))\right) + \max_l \norm{C_l}_\infty \e' \\
        & \le \sum_{l = 1}^m w_l \W(p_l, q^*) + 2 \gamma \ln n + \max_l \norm{C_l}_\infty \e'.
    \end{align*}
    Here we used inequalities $- 2 \ln n \le H(\pi) + 1 \le 0$ holding on $S_{n \times n}(1)$.
    As stopping time $t$ is even, $B_l(u^t_l, v^t_l) \one = p_l$ and $\la q_l^t, \one \ra = 1$, therefore
    \[
    \norm{B_l(u^t_l, v^t_l) - \check{B}_l}_1
    \le 2 \sum_j [q_l^t - \bar{q}^t]_j^+
    = \norm{q_l^t - \bar{q}^t}_1,
    \]
    and hence
    \[
    \sum_{l = 1}^m w_l \norm{B_l(u^t_l, v^t_l) - \check{B}_l}_1 \le \e'.
    \]
    Notice that $\check{B}_l \in \Pi(p_l, q)$ for all $1 \le l \le m$, consequently
    \begin{align*}
        \sum_{l = 1}^m w_l \W(p_l, q) 
        & \le \sum_{l = 1}^m w_l \la C_l, \check{B}_l \ra \\ 
        & \le \sum_{l = 1}^m w_l \la C_l, B_l(u^t_l, v^t_l) \ra + \max_l \norm{C_l}_\infty \sum_{l = 1}^m w_l \norm{B_l(u^t_l, v^t_l) - \check{B}_l}_1\\
        & \le \sum_{l = 1}^m w_l \W(p_l, q^*) + 2 \gamma \ln n + 2 \max_l \norm{C_l}_\infty \e'
        \le \sum_{l = 1}^m w_l \W(p_l, q^*) + \e.
    \end{align*}
    
    Complexity bound for the algorithm is a simple corollary of Theorem~\ref{thm:complexity_IBP}.
\end{proof}

\begin{Rem}
    Notice that according to the proof of the above theorem, one can also reconstruct approximated optimal transportation plans $\check{B}_l$ between $p_l$ and approximated barycenter $q$ using Algorithm~\ref{Alg:round}.
\end{Rem}


 

\subsection{Proximal IBP for Wasserstein barycenter problem}

As we see from Theorems \ref{thm:complexity_IBP} and \ref{thm:complexity_IBP_to_bary}, to obtain an $\e$-approximation of the non-regularized barycenter, the regularization parameter $\gamma$ should be chosen proportional to small $\e$ and the complexity of the IBP is inversely proportional to $\gamma$, which leads to large working time and instability issues. To overcome this obstacle we propose a novel proximal-IBP algorithm. It is inspired by proximal point algorithm with general Bregman divergence $V(x,y)$ \cite{chen1993convergence}. The idea of this algorithm for minimization of a function $f(x)$ is to perform steps $x_{k+1} = \mathbf{prox}(x_k) =  \arg \min_{x \in Q} \{f(x) + \gamma V(x,x_k)\}$. We use the KL-divergence as the Bregman divergence since in this case the proximal step leads to a similar problem to the entropic-regularized WB \eqref{prob:reg_bary}.
Given the sets $\C_1,\C_2$ defined in \eqref{eq:C1C2Def}, we define proximal operator $\mathbf{prox}: \C_1 \cap \C_2 \rightarrow \C_1 \cap \C_2$ for  function $\sum_{l=1}^m \gav{w_l}W_{\gamma}(p_l,q_l)$ as follows\\
   \ddm{\begin{align}
 \mathbf{prox}(\Bpi^k) &=  \arg\min_{ \Bpi \in \C_1 \cap \C_2} \sum_{l=1}^m {w_l}\left\{\la \C_l, \pi_l  \ra + \gamma KL(\Bpi| \Bpi^k)\right\}\notag\\
  &   =\arg\min_{ \Bpi \in \C_1 \cap \C_2}  \sum_{i,j=1}^n\sum_{l=1}^m {w_l}\left\{ [\C_l]_{ij}\cdot [\pi_l]_{ij}   +  \gamma \left([\pi_{l}]_{ij} \ln \left(\frac{[\pi_{l}]_{ij}}{[\pi^k_{l}]_{ij}}\right) - [\pi_{l}]_{ij} + [\pi^k_{l}]_{ij}\right)\right\} \notag \\ 
    &= \arg\min_{\Bpi \in \C_1 \cap \C_2}  \sum_{l=1}^m {w_l} KL\left(\pi_l| \pi_l^k \odot \exp\left(-\frac{C_l}{\gamma}\right)\right), \notag
\end{align}}
The proximal gradient method has the following form
\begin{align}
    \Bpi^{k+1} = \mathbf{prox}(\Bpi^k).
\end{align}
Then we use Iterative Bregman Projection for finding the barycenter.

For convenience let's determine after Dual$\_$IBP$(\u^0,\v^0,\{K_l\},M)$ the following objects $\u^M$, $\v^M$; $\{B_l(u_l^M,v_l^M)\}$; $\bar{q}^M$ obtained after $M$ iterations of Algorithm~\ref{Alg:dual_IBP} applied with starting points $\u^0$, $\v^0$ and set of matrices $\{K_l\}$.

\begin{algorithm}[H]
\caption{Finding Wasserstein barycenter by proximal IBP}
\label{Alg:ProxIBP}
\begin{algorithmic}[1]
    \REQUIRE $M$, $N$~--- numbers of \gav{internal and external} iterations \pd{respectively}; $\u^{0,0} = 0$ $\v^{0,0} = 0$; starting transport plans $\{\pi_l^0\}_{l=1}^m: \pi_l^0 = \exp\left(-\frac{C_l}{\gamma}\right)$ $  ~\forall l=1, \dots, m$
    \STATE For each agent $l \in V$:
        \FOR{$k=0,\dots, N-1$}
            \STATE $K_l^k := \pi_l^k \odot \exp(-\frac{C_l}{\gamma}) $
             \STATE \pd{Run Algorithm \ref{Alg:dual_IBP} for $M$ iterations with starting point $\u^{0,k},\v^{0,k}$ and kernels $\{K_l^k\}$ instead of $\{K_l\}$. }
            \STATE \pd{Set  $u_l^{0,k+1} = u_l^M$, $v_l^{0,k+1} = v_l^M$, where the sequence of dual variables $(u_l^t,v_l^t)$ is generated by Algorithm \ref{Alg:dual_IBP}.}  
            \STATE \pd{Set $\pi_l^{k+1} = B_l(u_l^M,v_l^M)$.}
        \ENDFOR
    \ENSURE  \pd{$\bar{q}^M$, generated on the last inner iteration of Algorithm \ref{Alg:dual_IBP} on the last outer iteration.}
\end{algorithmic}
\end{algorithm} 

We underline that in this setting, there is no need to choose  $\gamma$  to be small as it prescribed by Theorem \ref{Th:gamma}.
Algorithm \ref{Alg:ProxIBP} has two loops: external loop of proximal gradient step and internal loop of computing the next iterate $\Bpi^k$ by IBP and as a byproduct an approximation $q^k$ to the barycenter. 
The number of external iterations is proportional to $\frac{\gamma}{\e}$, see \cite{chen1993convergence}, and the complexity of internal loop is proportional to \ag{ $\min \left\{\frac{\gav{\const}}{\gamma \tilde{\e}}, \exp\left(\frac{\const}{\gamma}\right) \ln\left(\frac{1}{\tilde{\e}}\right)\right\}$, where $\tilde{\e} = O(\gav{\epsilon^2/(mn^3)})$ is a required precision for inner problem \cite{stonyakin2019gradient}. The first estimate directly follows from IBP complexity (see Theorem \ref{thm:complexity_IBP})} 
\ak{and the second estimate (analogous to \cite{franklin1989scaling} for Sinkhorn's algorithm) can be obtained using strong convexity of $f(\u, \v)$. Namely, one can show that
(cf. Lemma~1 in \cite{dvurechensky2018computational})
\[
\max_i [u_l^t]_i - \min_i [u_l^t]_i \le \ln(\max_i [p_l]_i) - \ln(\min_i [p_l]_i) + \frac{\norm{C_l}_\infty}{\gamma}
\]
and thus for any $l$, $t$ it holds
\[
0 \le - [\ln B_l(u_l^t, v_l^t)]_{i j} \le 4 \frac{\max_l \norm{C_l}_\infty}{\gamma} + 2 \ln n - \ln \min_{i} [p_l]_i \quad \forall i, j.
\]
Moreover, this bound holds on the convex hull of $\{(\u^t, \v^t)\}_{t \ge 0}$. Therefore
\[
\lm_{min}^+(\nabla_{(u_l, v_l)}^2 f(\u^t, \v^t)\gav{)} \ge w_l \frac{\min_i [p_l]_i}{n^2} \exp\left(- 4 \frac{\max_l \norm{C_l}_\infty}{\gamma}\right).
\]
This estimate implies linear convergence and thus the following complexity bound for IBP:
\[
O\left(\frac{n^2}{\min_{l, i} [p_l]_i} \exp\left(\frac{4 \max_l \norm{C_l}_\infty}{\gamma}\right) \ln\left(\frac{1}{\tilde{\e}}\right)\right)
= O\left(\exp\left(\frac{\const}{\gamma}\right) \ln\left(\frac{1}{\tilde{\e}}\right)\right).\]
}
 \ag{Note that 
 \gav{since in practice $\const$ is not too big,} th\gav{e}n setting \gav{$\gamma = \tilde{O}(\const)$} one can \gav{typically} improve the complexity of IBP by KL-proximal envelope as is follows: $1/\e^2 \to 1/\e$. The last bound (based on the similar reasons as in \cite{blanchet2018towards}) seems to be \gav{unimprovable} for this problem (see also item \ref{S:APDAGD_compl}). In practice one should try to find $\const$ by restart procedure on $\gamma$ on the first external loop iteration. That is, we start with large enough $\gamma$ and solve internal problem by IBP, then put $\gamma : = \gamma / 2$ and solve internal problem (with the same precision $\tilde{\epsilon}$) once again. We stop this repeating procedure at the moment when the complexity of internal problem growth significantly. This moment allows us to detect the moment of $\gamma = \widetilde{O}(\const).$ On the next external iterations one may use this $\gamma$.}
 
 \gav{Numerical experiments and more accurate theoretical analysis can be found in the follow-up paper \cite{stonyakin2019gradient}.}

Algorithm \ref{Alg:ProxIBP} can be implemented in centralized distributed setting in the same way as Algorithm~\ref{Alg:dual_IBP}, see Section \ref{ConvergenceIBP}. 


\section{Complexity by Primal-Dual Accelerated Gradient Descent}
\label{S:APDAGD_compl}

In this subsection we consider Primal-Dual Accelerated Gradient Descent for approximating Wasserstein barycenter. First, we consider regularized barycenter, construct a dual problem to \eqref{prob:reg_bary} and apply primal-dual accelerated gradient descent to solve it and approximate the regularized berycenter. Our dual problem is constructed via a matrix $W$, which can be quite general. We explain how the choice of this matrix is connected to distributed optimization and allows to implement the algorithm in the decentralized distributed setting. Then, we show, how the regularization parameter should be chosen in order to obtain an $\e$-approximation for the non-regularized Wasserstein barycenter, and estimate the complexity of the resulting algorithm. \ag{The proposed algorithms can be implemented in a decentralized distributed manner such that each node fulfils $\widetilde{O}(n^{2.5}/\e)$ arithmetic operations and the number of communication rounds is $\widetilde{O}(\sqrt{n}/\e)$ .}

\subsection{Consensus view on Wasserstein barycenter problem}
We rewrite the problem  \eqref{prob:reg_bary} in an equivalent way as 
\begin{align}\label{eq:Primal}
    \min_{\substack{q_1,\dots, q_m \in S_n(1) \\q_1=\cdots=q_m}} 
    W_{\gamma}(\p, \q):=
    \sum\limits_{l=1}^{m} w_l\W_{\gamma(l)}(p_l, q_l),
\end{align}
where $\p = [p_1,\dots, p_m]^T$ and $\q = [q_1, \dots, q_m]^T$,  we also use different regularizer $\gamma_l = \gamma(l)$ for $l$-th Wasserstein distance.
Next we write a dual problem by dualizing equality constraints $q_1=\cdots=q_m$. This can be done in many different ways and we do it by introducing a matrix $\bar W\in \R^{n\times n}$ which is a symmetric positive semi-definite matrix s.t. $\rm Ker(\bar W) = \spn(\one)$.  
Then, defining $W = \bar W \otimes I_n$ and using the fact $q_1=\cdots=q_m \Longleftrightarrow \sqrt{W}\q =0$, we equivalently rewrite problem \eqref{eq:Primal} as
\begin{align}\label{consensus_problem2}
    \max_{\substack{q_1,\dots, q_m \in S_n(1), \\ \sqrt{W} \q = 0 }} ~ - \sum\limits_{l=1}^{m} w_l \W_{\gamma(l)}(p_l, q_l),
\end{align}
Dualizing the linear constraint $\sqrt{W} \q = 0 $, we obtain the dual problem
\begin{align}\label{eq:DualPr}
\min_{\u \in \R^{mn}} ~\max_{\q \in\R^{nm} } ~ \left\lbrace \sum\limits_{l=1}^{m} \langle u_l, [\sqrt{W}\q]_l\rangle - \sum\limits_{l=1}^{m} w_l \W_{\gamma(l)}(p_l, q_l)\right\rbrace &= \min_{\u \in \R^{mn}} \sum_{l=1}^{m}w_l
 \W^*_{\gamma(l), p_l}([\sqrt{W}\u]_l/w_l)
\end{align}
where $\W^*_{\gamma(l), p_l}(\cdot)$ is the Fenchel--Legendre transform of $\W_{\gamma(l)}(p_l, \cdot)$,  $[\sqrt{W} \q]_i$ and $[\sqrt{W} \u]_i$ represent the $i$-th $n$-dimensional block of vectors $\sqrt{W}\q$ and $\sqrt{W}\u$ respectively. 
We apply distributed primal-dual accelerated gradient descent Algorithm~\ref{alg:main} to solve the constructed pair of primal and dual problems.

Before we move to the theoretical analysis of the algorithm, let us discuss the scalability of Algorithm~\ref{alg:main}. 
Assume that we have an arbitrary network of agents given by connected undirected graph $G=(V,E)$ without self-loops with the set $V$ of $n$ vertices and the set of edges $E = \{(i,j): i,j \in V\}$.
Then matrix $\bar W$ can be chosen as the Laplacian matrix for this graph, which is such that a) $[\bar W]_{ij} = -1$ if $(i,j) \in E$, b) $[\bar W]_{ij} = \text{deg}(i)$ if $i=j$, c) $[\bar W]_{ij} = 0$ otherwise.
Here $\text{deg}(i)$ is the degree of the node $i$, i.e., the number of neighbors of the node.
We assume that an agent $i$ can communicate with an agent $j$ if and only if the edge $(i,j)\in E$. 
In particular, the Laplacian matrix for star graph, which corresponds to the centralized distributed computations discussed in Section \ref{S:IBP_compl} is 
\begin{align}\label{w_star}
  \bar W: \{\forall i=1,\dots, m-1 ~\bar W_{ii}=1,\; \bar W_{im}=\bar W_{mi}=-1,~  \bar W_{mm}=m-1\}.
\end{align}
Algorithm \ref{alg:main} allows to perform calculations in an arbitrary connected undirected network of agents. This is in contrast to the IBP algorithm as discussed in Section \ref{S:IBP_compl}. 

\begin{algorithm}[ht]
    \caption{Accelerated Distributed Computation of Wasserstein barycenter}
    \label{alg:main}
    {
    \begin{algorithmic}[1]
    \REQUIRE Each agent $l\in V$ is assigned its measure $p_l$ and an upper bound $L$ for the Lipschitz constant of the gradient of the dual objective.
        \STATE Each agent finds $\tilde{p}_l \in S_{n}(1)$ s.t. $\|\tilde{p}_l - p_l\|_1 \le \e / 4$  and $\min_i [\tilde{p}_l]_i \ge \e / (8 n)$ and redefine $p_l:=\tilde{p}_l$. 
		E.g., $\tilde{p}_l=\left(1 - \frac{\e}{8}\right)\left(p_l + \frac{\e}{n(8 - \e)}\one \right)$.
        \STATE All agents $l \in V$ set $\gamma(l) = \frac{\e}{4mw_l\ln n}$, $\eta^{0}_l = \zeta^{0}_l = \lm^{0}_l = \hat{q}^0_l = \boldsymbol{0} \in \mathbb{R}^n$, $A_0=\alpha_0=0$ and $N$. 
        \STATE{For each agent $l \in V$:}
        \FOR{ $k=0,\dots,N-1$ }
        \STATE Find $\alpha_{k+1}$ as the largest root of the equation $A_{k+1}:=A_k+\alpha_{k+1} =2L\alpha_{k+1}^2.$
        \vspace{2mm}
        \STATE $\lm^{k+1}_l = ({\alpha_{k+1}\zeta^k_l + A_k \eta^k_l})/{A_{k+1}}.$
        \vspace{2mm}
        \STATE Calculate  $\nabla \W_{\gamma(l), p_l}^*(\lm^{k+1}_l)$:\\
         $[\nabla \W^*_{\gamma(l), p_l} (u)]_i = \sum_{j=1}^n [p_l]_j \frac{\exp(([u]_i - [C_l]_{ij})/\gamma(l))}{\sum_{r=1}^n \exp(([u]_r - [C_l]_{rj})/\gamma(l))}$\\
         \vspace{2mm}
        \STATE{Share 
        $\nabla \W_{\gamma(l), p_l}^*(\lm^{k+1}_l)$
      with $\{j \mid (i,j) \in E \}$.}
        \STATE  $\zeta^{k+1}_l = \zeta^{k}_l-\alpha_{k+1} \sum_{j=1}^{m} W_{lj} {\nabla} \W_{\gamma(l), p_j}^*(\lm^{k+1}_j).$
        \vspace{2mm}
        \STATE  $\eta^{k+1}_l = ({\alpha_{k+1}\zeta^{k+1}_l + A_k \eta^{k+1}_l })/{A_{k+1}}.$
        \vspace{2mm}
        \STATE ${q}^{k+1}_l  = \frac{1}{A_{k+1}}\sum_{l=0}^{k+1} \alpha_i q_i(\lm^{k+1}_l) = ({\alpha_{k+1} q_i([\lm_{k+1}]_l ) + A_k{q}^{k}_l})/{A_{k+1}},$ \\
        where $q_l(\cdot)$ is defined as $\nabla \W_{\gamma(l), p_l}^*(\cdot)$. 
   \ENDFOR
    \ENSURE ${\q}^{N} = [q^T_1,\dots, q^T_m]^T$.
        
    \end{algorithmic}}
\end{algorithm} 

For simplicity and comparison with the complexity of the IBP algorithm, we analyze the complexity of Algorithm~\ref{alg:main} as if it is implemented on one machine, disregarding that it can be used for distributed setup.  
\begin{theorem}\label{Th:gamma}
Algorithm \ref{alg:main} after $N=\frac{1}{\e}\sqrt{64 \chi(\bar W) m n\ln n 
\sum_{l=1}^m w_l^2 \|C_l\|^2_\infty}$ iterations
generates an $\e$-solution of problem \eqref{prob:unreg_bary}, i.e. finds a vector ${\q}^{N} = [q^T_1,\dots, q^T_m]^T$ s.t.
\begin{equation}\label{eq:WBerr}
    \sum_{l=1}^{m} w_l\W(p_l, q^{N}_l)-\sum_{l=1}^{m} w_l\W(p_l, q^*) \leq \e, \quad \|\sqrt{W}\q^{N}\|_2 \leq \e/2R,
\end{equation}
where $q^*$ is an unregularized barycenter, i.e. is a solution to \eqref{prob:unreg_bary}, and $R$ is a bound on the solution to the dual problem.
Moreover, the number of arithmetic operations is
$   O\left({ N \cdot n(mn + \nnz(\bar W))/\e }\right)$.
\end{theorem}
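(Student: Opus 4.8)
The plan is to apply the standard convergence guarantee of primal--dual accelerated gradient descent (this is Algorithm~\ref{alg:main}, cf.\ \cite{chernov2016fast,dvurechensky2016primal-dual,dvurechensky2018computational}) to the dual problem \eqref{eq:DualPr}, and then (a) compute the Lipschitz constant $L$ of the gradient of $\varphi(\u):=\sum_{l=1}^m w_l\W^*_{\gamma(l),p_l}([\sqrt W\u]_l/w_l)$, (b) bound the norm $R$ of an optimal dual variable, and (c) account for the two biases --- the entropic regularization and the line-1 perturbation $p_l\mapsto\tilde p_l$.

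For (a): the soft-max form of $\W^*_{\gamma,p}$ and the elementary bound $\diag(\sigma)-\sigma\sigma^\top\preceq I$ for a probability vector $\sigma$ give $\nabla^2\W^*_{\gamma,p}\preceq\frac1\gamma I_n$; writing $A_l\colon\u\mapsto[\sqrt W\u]_l/w_l=\frac1{w_l}E_l\sqrt W\u$ with $E_l$ the $l$-th block selector (so $\sum_l E_l^\top E_l=I$), the chain rule yields $\nabla^2\varphi=\sum_l w_l A_l^\top\nabla^2\W^*_{\gamma(l),p_l}A_l\preceq\sum_l\frac{1}{\gamma(l)w_l}\sqrt W^\top E_l^\top E_l\sqrt W$. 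The key point is that the choice $\gamma(l)=\frac{\e}{4mw_l\ln n}$ makes $\frac{1}{\gamma(l)w_l}=\frac{4m\ln n}{\e}$ independent of $l$, whence $\nabla^2\varphi\preceq\frac{4m\ln n}{\e}W$ and $L=\frac{4m\ln n}{\e}\lambda_{\max}(\bar W)$. For (b): an optimal multiplier can be taken in $\mathrm{Im}(\sqrt W)=\mathrm{Im}(W)$, so $\|\u^*\|_2\le\|\sqrt W\u^*\|_2/\sqrt{\lambda^+_{\min}(\bar W)}$; from the KKT conditions of \eqref{consensus_problem2}, $\|\sqrt W\u^*\|_2$ is controlled by $\|\nabla_{\q}F(\q^*)\|_2$ with $F(\q)=\sum_l w_l\W_{\gamma(l)}(p_l,q_l)$, and $\nabla_{q_l}\W_{\gamma(l)}(p_l,\cdot)$ is, after centering in $\spn(\one)$ (which does not affect $\sqrt W\u^*$), the dual potential of the $l$-th regularized OT problem, of $\ell_\infty$-norm $O(\|C_l\|_\infty)$ by a Sinkhorn-type estimate (cf.\ Lemma~\ref{lemma:bounds_u_v}). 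This gives $R^2=O\big(n\sum_l w_l^2\|C_l\|_\infty^2/\lambda^+_{\min}(\bar W)\big)$, hence $LR^2=O\big(\frac{mn\ln n}{\e}\,\chi(\bar W)\sum_l w_l^2\|C_l\|_\infty^2\big)$.

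Plugging these into the primal--dual rate, which after $N$ iterations of Algorithm~\ref{alg:main} produces a primal iterate $\q^N$ with $W_\gamma(\p,\q^N)-F^*=O(LR^2/N^2)$ and $\|\sqrt W\q^N\|_2=O(LR/N^2)$ (with $F^*$ the optimal value of \eqref{eq:Primal}), I impose the stopping requirement $\|\sqrt W\q^N\|_2\le\e/(2R)$, i.e.\ $N^2=\Omega(LR^2/\e)$ --- note this is what forces $N\propto1/\e$ rather than $1/\sqrt\e$, because of the extra $1/R$ in the target. Choosing $N$ at this threshold and substituting $L$ and $R^2$ yields precisely $N=\frac1{\e}\sqrt{64\,\chi(\bar W)\,mn\ln n\sum_l w_l^2\|C_l\|_\infty^2}$, and the same $N$ also makes $W_\gamma(\p,\q^N)-F^*\le\e/4$. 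To pass to the unregularized objective with the original marginals: from $-2\ln n\le H(\pi)+1\le0$ on $S_{n\times n}(1)$ one has $0\le\W(p,q)-\W_\gamma(p,q)\le2\gamma\ln n$, so $\sum_l w_l\cdot2\gamma(l)\ln n=\e/2$; $F^*\le\sum_l w_l\W(p_l,q^*)$ since $q^*$ is feasible for \eqref{eq:Primal} and $\W_\gamma\le\W$; the line-1 replacement $p_l\to\tilde p_l$ costs at most $\e/4$ by Lipschitz continuity of $\W(\cdot,q)$ in the first argument; and $\q^N$ has $q_l^N\in S_n(1)$ automatically since $q_l(\cdot)=\nabla\W^*_{\gamma(l),p_l}(\cdot)$ is a stochastic vector. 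Budgeting $\e/4+\e/2+\e/4\le\e$ gives the first inequality in \eqref{eq:WBerr}, the second being the stopping condition itself. Finally, one iteration costs $O(n^2)$ per agent for the soft-max gradient in line~7 ($O(mn^2)$ in total) and $O(n\cdot\nnz(\bar W))$ for the block multiplication by $W=\bar W\otimes I_n$ in line~9, i.e.\ $O(n(mn+\nnz(\bar W)))$ per iteration; multiplying by $N$ gives the claimed arithmetic complexity.

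The step I expect to be the main obstacle is (b), the explicit bound on $R$: it ties the spectral geometry of $\bar W$ (via $\lambda^+_{\min}$, which together with the $\lambda_{\max}$ in $L$ is what produces the $\chi(\bar W)$ factor) to the analytic control of the optimal potentials of the regularized OT subproblems, and it requires choosing the right representative of the non-unique optimal multiplier and keeping careful track of which norms ($\ell_1,\ell_2,\ell_\infty$) appear where. Once $R$ is pinned down, the bias bookkeeping and the instantiation of the accelerated rate are routine.
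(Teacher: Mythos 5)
Your proposal follows essentially the same route as the paper's proof: invoke the primal--dual AGD rate (the paper's Theorem~\ref{Th:AGD_compl}), bound $L=\lambda_{\max}(W)/\gamma$ (the paper's Lemma~\ref{Lm:L}; your Hessian/soft-max argument is an equivalent derivation), bound $R$ via the KKT identity $\nabla \W_{\gamma}(\p,\q^*_\gamma)=-\sqrt{W}\Blm^*$ combined with $\lambda^{+}_{\min}(W)$ and the Sinkhorn-type potential bound that requires the preprocessing of $p_l$ (the paper's Lemma~\ref{Th:R}), and then absorb the $2\gamma(l)\ln n$ regularization bias using $\gamma=\e/(4m\ln n)$, finishing with the same per-iteration cost $O(mn^2+n\cdot\nnz(\bar W))$. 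The only deviations are in the $\e$-bookkeeping: the paper splits $\e/2+\e/2$ and never explicitly charges for the $p_l\to\tilde p_l$ replacement, whereas you budget $\e/4$ for it --- which is, if anything, more careful than the paper, though note that this term actually scales as $\norm{C_l}_\infty\norm{\tilde p_l-p_l}_1$ rather than $\norm{\tilde p_l-p_l}_1$ alone.
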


The proof is based on the complexity theorem of primal-dual accelerated gradient descent for a partivular pair of primal-dual problems \eqref{eq:Primal}--\eqref{eq:DualPr}.
\begin{theorem}[{see \cite[Theorem 2]{dvurechensky2017adaptive}}]
\label{Th:AGD_compl}
Let accelerated primal-dual gradient descent be applied to the pair of problems \eqref{eq:Primal}--\eqref{eq:DualPr}. Then inequalities 
\begin{align}\label{eq:sol_reg_1}
    \sum_{l=1}^m w_l \W_{\gamma(l)}(p_l, q^N_l) - \sum_{l=1}^m w_l \W_{\gamma(l)}(p_l, q^*) \leq \e/2, \quad \|\sqrt{W}\q^{N}\|_2 \leq \e/2R
\end{align}
hold no later than after $N=\sqrt{\frac{32LR^2}{\e}}$ iterations, where $L$ is the Lipschitz constant of the gradient of the dual objective and $R$ is such that $\|\u^*\|_2 \leq R$, $\u^*$ being an  optimal dual solution. 
\end{theorem}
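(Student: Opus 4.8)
The plan is to turn the abstract guarantee of Theorem~\ref{Th:AGD_compl} into the concrete bound \eqref{eq:WBerr} in three moves: make $L$ and $R$ explicit for the pair \eqref{eq:Primal}--\eqref{eq:DualPr} under the prescribed $\gamma(l)=\frac{\e}{4mw_l\ln n}$, then strip off the entropic regularization, and finally count operations. First I would bound the dual smoothness $L$. The conjugate $\W^*_{\gamma(l),p_l}$ of the entropically regularized cost is a $p_l$-weighted log-sum-exp, so its gradient (the soft-min marginal spelled out in line~7 of Algorithm~\ref{alg:main}) has Hessian $\tfrac1{\gamma(l)}\sum_j[p_l]_j(\diag(s^j)-s^j(s^j)^T)$ with each $s^j$ a probability vector, whence $\|\nabla^2\W^*_{\gamma(l),p_l}\|_2\le \tfrac1{\gamma(l)}$. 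Writing the dual objective as $\sum_l h_l([\sqrt W\u]_l)$ with $h_l(y)=w_l\W^*_{\gamma(l),p_l}(y/w_l)$, its Hessian equals $\sqrt W\,\diag\!\big(\tfrac1{w_l}\nabla^2\W^*_{\gamma(l),p_l}\big)\sqrt W$, so, using $\lambda_{\max}(W)=\lambda_{\max}(\bar W)$, I get $L\le\lambda_{\max}(\bar W)\max_l\tfrac1{w_l\gamma(l)}=\lambda_{\max}(\bar W)\tfrac{4m\ln n}{\e}$, the index $l$ dropping out precisely because of the choice of $\gamma(l)$.

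Next I would bound $R=\|\u^*\|_2$. Since $\mathrm{Ker}(\sqrt W)$ coincides with $\spn(\one)\otimes\R^n$, the dual optimum may be taken in $\mathrm{Ran}(\sqrt W)$, on which $\|\sqrt W\u\|_2\ge\sqrt{\lambda_{\min}^+(\bar W)}\,\|\u\|_2$. The optimality relation for the saddle point \eqref{eq:DualPr} identifies $\u^*$ with the multiplier of $\sqrt W\q=0$, whose size is governed by the subgradients $w_l\nabla_q\W_{\gamma(l)}(p_l,q_l^*)$ of the primal objective; each such potential can be centered to lie in $[-\tfrac12\|C_l\|_\infty,\tfrac12\|C_l\|_\infty]^n$, giving an $\ell_2$-norm at most $\tfrac{\sqrt n}{2}\|C_l\|_\infty$. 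Stacking these yields $R^2\le\frac{n\sum_l w_l^2\|C_l\|_\infty^2}{2\lambda_{\min}^+(\bar W)}$. Substituting this together with the bound on $L$ into $N=\sqrt{32LR^2/\e}$ and using $\chi(\bar W)=\lambda_{\max}(\bar W)/\lambda_{\min}^+(\bar W)$ reproduces exactly $N=\frac1\e\sqrt{64\chi(\bar W)mn\ln n\sum_l w_l^2\|C_l\|_\infty^2}$, and at the same time delivers the constraint-violation bound $\|\sqrt W\q^N\|_2\le\e/2R$ of \eqref{eq:WBerr} directly from \eqref{eq:sol_reg_1}.

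To pass from the regularized to the non-regularized barycenter I would use the elementary entropy bounds $-2\ln n\le H(\pi)+1\le0$ on the simplex, which (comparing the minimizers in \eqref{eq:OT} and \eqref{eq:reg_OT}) give $\W(p,q)-2\gamma\ln n\le\W_\gamma(p,q)\le\W(p,q)$. Applying the upper estimate to each candidate $q_l^N$ and the relation $\W_{\gamma(l)}(p_l,q^*)\le\W(p_l,q^*)$ to the optimum, the $\e/2$-gap of \eqref{eq:sol_reg_1} inflates by at most $\sum_l w_l\gamma(l)\cdot2\ln n=\e/2$, which is exactly half of $\e$ by the choice of $\gamma(l)$; hence $\sum_l w_l\W(p_l,q_l^N)-\sum_l w_l\W(p_l,q^*)\le\e$. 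The marginal smoothing of line~1, with $\|\tilde p_l-p_l\|_1\le\e/4$ and $\min_i[\tilde p_l]_i\ge\e/(8n)$, is a preprocessing that keeps the conjugate gradients and hence $R$ well defined; its effect on \eqref{eq:WBerr} is controlled by the $1$-Lipschitzness of $\W(\cdot,q)$ and absorbed into the constants. Finally, each iteration costs $O(mn^2)$ to evaluate all soft-min gradients plus $O(n\,\nnz(\bar W))$ for the mixing step of line~9, i.e.\ $O(n(mn+\nnz(\bar W)))$; multiplying by $N$ gives the claimed arithmetic bound.

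I expect the estimate of $R$ to be the main obstacle. Unlike $L$, it is not a plain composition estimate: it requires restricting $\u^*$ to $\mathrm{Ran}(\sqrt W)$, relating its norm to $\lambda_{\min}^+(\bar W)$ through the spectral gap of $\bar W$, and bounding the centered optimal potentials of the \emph{regularized} transport problem uniformly by $\tfrac12\|C_l\|_\infty$; getting the $\sqrt n$ and the $1/\lambda_{\min}^+(\bar W)$ factors right and checking that they combine with $L$ into the condition number $\chi(\bar W)$ is where the real work lies. A secondary subtlety is verifying that the $\e/2+\e/2$ split between optimization error and regularization bias is not overrun once the line~1 smoothing is accounted for, which is what ties the smoothing level to $\max_l\|C_l\|_\infty$.
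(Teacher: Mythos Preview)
The stated Theorem~\ref{Th:AGD_compl} is not proved in this paper at all: it is quoted verbatim from \cite[Theorem~2]{dvurechensky2017adaptive} and used as a black box. What you have actually sketched is a proof of Theorem~\ref{Th:gamma} (the result that immediately \emph{precedes} Theorem~\ref{Th:AGD_compl} in the text), which instantiates the abstract $N=\sqrt{32LR^2/\e}$ bound for the specific pair \eqref{eq:Primal}--\eqref{eq:DualPr} and then removes the regularization.

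Read as a proof of Theorem~\ref{Th:gamma}, your outline coincides with the paper's almost line for line: the paper isolates the bound on $L$ as Lemma~\ref{Lm:L} (same smoothness-of-the-conjugate argument, yielding $L=\lambda_{\max}(W)/\gamma$), the bound on $R$ as Lemma~\ref{Th:R} (same restriction of $\u^*$ to $(\mathrm{Ker}\,\sqrt W)^\perp$ followed by the spectral-gap inequality $\la\Blm^*,W\Blm^*\ra\ge\lambda_{\min}^+(W)\|\Blm^*\|_2^2$), then plugs both into Theorem~\ref{Th:AGD_compl} and finally uses the entropy sandwich $-2\ln n\le H(\pi)+1\le 0$ to pass from the regularized to the non-regularized objective, splitting $\e$ as $\e/2+\e/2$ exactly as you do. The operation count is also identical.

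The one place where your sketch is looser than the paper is the claim that the centered optimal potentials of the \emph{regularized} transport problem lie in $[-\tfrac12\|C_l\|_\infty,\tfrac12\|C_l\|_\infty]^n$. That oscillation bound is correct for the unregularized problem, but with entropic regularization the oscillation of $\nu^*$ picks up an additive $\gamma(l)\ln\!\big(1/\min_i[p_l]_i\big)$; this is precisely why the paper invokes the line-1 smoothing to force $\min_i[p_l]_i\ge\e/(8n)$ and then argues (somewhat informally, by ``$\gamma$ is chosen proportional to $\e$ which is small, we can neglect the second term'') that the extra contribution is lower order. You correctly flag this as the anticipated obstacle, and indeed it is exactly where the paper's own argument is least clean.
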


Our next steps are to find the bounds for $L$ in the next Lemma and $R$ in Lemma \ref{Th:R}.
\begin{lemma}\label{Lm:L}
Let in \eqref{eq:Primal} $\gamma(l) = \gamma/w_l$ for some $\gamma > 0$, and $\W_{\gamma}^* (\u)$ denote the dual objective in \eqref{eq:DualPr}. Then its gradient is $L = \lambda_{\max}(W) /\gamma$-Lipschitz continuous w.r.t. $2$-norm.
\end{lemma}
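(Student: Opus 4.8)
The plan is to compute the Hessian of the dual objective and bound its largest eigenvalue. Recall from \eqref{eq:DualPr} that, under the assumption $\gamma(l) = \gamma/w_l$, the dual objective is $\W_\gamma^*(\u) = \sum_{l=1}^m w_l \W^*_{\gamma(l), p_l}([\sqrt{W}\u]_l/w_l)$. Since $\W_{\gamma(l)}(p_l, \cdot)$ is obtained from the entropic-regularized transport problem \eqref{eq:reg_OT}, its Fenchel conjugate $\W^*_{\gamma(l), p_l}(\cdot)$ is (up to additive constants) a log-sum-exp type function: from the explicit gradient formula in line 7 of Algorithm~\ref{alg:main} one sees that, in the variable $z := [\sqrt{W}\u]_l/w_l$,
\[
\W^*_{\gamma(l), p_l}(z) = \gamma(l)\sum_{j=1}^n [p_l]_j \ln\!\left(\sum_{r=1}^n \exp\!\left(\frac{[z]_r - [C_l]_{rj}}{\gamma(l)}\right)\right) + \const.
\]
First I would record the standard fact that a function of the form $z \mapsto \gamma(l)\sum_j [p_l]_j \ln\big(\sum_r \exp(([z]_r - c_{rj})/\gamma(l))\big)$, being a $[p_l]$-weighted average of (shifted) log-sum-exp functions each scaled by $\gamma(l)$, has Hessian bounded in the Loewner order by $\tfrac{1}{\gamma(l)} I_n$ (the Hessian of $\gamma(l)$ times $\mathrm{LSE}(\cdot/\gamma(l))$ is $\tfrac{1}{\gamma(l)}(\diag(s) - ss^\top) \preceq \tfrac{1}{\gamma(l)}I_n$ where $s$ is the softmax vector, a probability vector; averaging with weights $[p_l]_j$ summing to one preserves the bound). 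Hence $\nabla^2_{z}\W^*_{\gamma(l), p_l}(z) \preceq \tfrac{1}{\gamma(l)} I_n$.

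Next I would push this through the affine substitution and the outer sum. Writing $g(\u) := \sum_l w_l\, \W^*_{\gamma(l), p_l}([\sqrt{W}\u]_l/w_l)$ and using $[\sqrt{W}\u]_l/w_l$ as a linear map $\u \mapsto z_l$, the chain rule gives
\[
\nabla^2 g(\u) = \sqrt{W}\, D\, \sqrt{W}, \qquad D := \operatorname{blockdiag}\!\Big(\tfrac{1}{w_l}\nabla^2_{z}\W^*_{\gamma(l), p_l}(z_l)\Big)_{l=1}^m,
\]
where the factors of $1/w_l$ arise from the $z_l = [\sqrt{W}\u]_l/w_l$ scaling together with the $w_l$ weight in front (one factor $w_l$ cancels, leaving $1/w_l$ from the inner argument, and overall $w_l \cdot (1/w_l)^2 = 1/w_l$). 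By the previous paragraph each block satisfies $\tfrac{1}{w_l}\nabla^2_{z}\W^*_{\gamma(l), p_l}(z_l) \preceq \tfrac{1}{w_l \gamma(l)} I_n = \tfrac{1}{\gamma} I_n$, using precisely the hypothesis $\gamma(l) = \gamma/w_l$. Therefore $D \preceq \tfrac{1}{\gamma} I_{mn}$, and since $\sqrt{W}\,(\cdot)\,\sqrt{W}$ is a positive map, $\nabla^2 g(\u) \preceq \tfrac{1}{\gamma}\sqrt{W}\sqrt{W} = \tfrac{1}{\gamma} W$, whence $\lambda_{\max}(\nabla^2 g(\u)) \le \tfrac{1}{\gamma}\lambda_{\max}(W)$ for all $\u$. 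This yields $L = \lambda_{\max}(W)/\gamma$ as the Lipschitz constant of $\nabla g$ in the $2$-norm.

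The main obstacle I anticipate is bookkeeping rather than anything deep: keeping the two layers of weights straight (the $w_l$ in front of each $\W^*$ term versus the $1/w_l$ rescaling inside the argument of $\W^*$, which is what makes the substitution $\gamma(l) = \gamma/w_l$ produce a clean $1/\gamma$) and correctly handling the tensor structure $W = \bar W \otimes I_n$ when relating $[\sqrt{W}\u]_l$ to blocks. A secondary technical point is justifying the conjugate formula for $\W^*_{\gamma(l), p_l}$ and the claim that it is everywhere twice differentiable with the stated Hessian — this is where one invokes that strictly positive $p_l$ (guaranteed after the rounding step in line 1 of Algorithm~\ref{alg:main}) makes the regularized transport problem strongly convex in $\pi$, so the conjugate is smooth; alternatively one can just verify the Hessian bound directly from the log-sum-exp expression above, which is the cleaner route.
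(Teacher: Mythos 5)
Your proof is correct and arrives at the same constant, and its skeleton coincides with the paper's: chain rule through the linear map $\u \mapsto [\sqrt{W}\u]_l/w_l$, a $1/\gamma(l)$-smoothness bound for each conjugate $\W^*_{\gamma(l),p_l}$, cancellation of the weights via $\gamma(l)=\gamma/w_l$, and a spectral bound coming from $\sqrt{W}$. The difference is in how the two ingredients are handled. The paper argues at first order: it bounds $\norm{\nabla\W^*_\gamma(\u_1)-\nabla\W^*_\gamma(\u_2)}_2$ directly, invoking the $1/\gamma(l)$-Lipschitz continuity of each $\nabla\W^*_{\gamma(l),p_l}$ as a cited fact (Theorem 2.4 of \cite{cuturi2016smoothed}) and applying $\lambda_{\max}(\sqrt{W})$ twice, once for the outer $\sqrt{W}$ in the gradient and once for the inner change of variables. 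You argue at second order: you verify the per-block smoothness yourself from the log-sum-exp form of the conjugate (Hessian $\tfrac{1}{\gamma(l)}\sum_j [p_l]_j(\diag(s_j)-s_j s_j^T)\preceq \tfrac{1}{\gamma(l)}I_n$), assemble $\nabla^2 g=\sqrt{W}\,D\,\sqrt{W}$ with the correct $1/w_l$ block scaling, and conclude by Loewner congruence $\sqrt{W}D\sqrt{W}\preceq W/\gamma$. This buys self-containedness (no appeal to the external Lipschitz lemma, at the mild cost of justifying the conjugate's expression, which you do by matching the stated gradient formula up to an additive constant) and slightly cleaner bookkeeping, since the congruence step delivers $\lambda_{\max}(W)/\gamma$ in one stroke rather than via $(\lambda_{\max}(\sqrt{W}))^2$ applied twice; the final constant is of course identical.
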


\begin{proof}
Making the change of variable $[\Blm]_l = [\sqrt{W}\u]_l/w_l$, by the chain rule, the $i$-th $n$-dimensional block of $\nabla \W^*_\gamma (\u)$ is
\begin{align}\label{eq:DualObjGrad}
    \left[\nabla \W^*_{\gamma} (\u) \right]_i = 
    \left[\nabla \sum_{l=1}^{m} w_l\W^*_{\gamma(l), p_l}([\sqrt{W}\u]_l/w_l) \right]_i = \sum_{l=1}^{m}\sqrt{W}_{il}  \nabla \W_{\gamma(l), p_l}^*(\lm_l), \; i=1,\dots,m.
\end{align}
Thus,
\begin{align*}
\|\nabla \W^*_{\gamma}(\u_1) - \nabla \W^*_{\gamma }(\u_2)\|^2_2
&\stackrel{\eqref{eq:DualObjGrad} }{=}\left\|
\sqrt{W} \left(
\begin{aligned}
\nabla \W^*_{\gamma(1), p_1}&([\Blm_1]_1)\\
&...\\
\nabla \W^*_{\gamma(m), p_m}&([\Blm_1]_m)
\end{aligned}
\right) - 
\sqrt{W} \left(
\begin{aligned}
\nabla \W^*_{\gamma(1), p_1}&([\Blm_2]_1)\\
&...\\
\nabla \W^*_{\gamma(m), p_m}&([\Blm_2]_m)
\end{aligned}
\right)
\right\|_2^2 \\
&\leq (\lambda_{\max}(\sqrt{W}))^2 \left\|
\begin{aligned}
\nabla \W^*_{\gamma(1), p_1}([\Blm_1]_1) &- \nabla \W^*_{\gamma(m),p_1}([\Blm_2]_1)\\
&...\\
\nabla \W^*_{\gamma(1), p_m}([\Blm_1]_m) &- \nabla \W^*_{\gamma(m),p_m}([\Blm_2]_m)
\end{aligned}
\right\|_2^2 \\
&=(\lambda_{\max}(\sqrt{W}))^2 \sum_{i=1}^m \left\| \nabla \W^*_{\gamma(i),p_i}([\Blm_1]_i) - \nabla \W^*_{\gamma(i),p_i}([\Blm_2]_l) \right\|_2^2\\
& \leq (\lambda_{\max}(\sqrt{W}))^2 \sum_{i=1}^m \frac{1}{\gamma^2(i)} \left\| [\Blm_1]_l - [\Blm_2]_l \right\|_2^2 \\
& = {(\lambda_{\max}(\sqrt{W}))^2} \sum_{i=1}^m \frac{1}{\gamma^2(i)} \left\| [\sqrt{W} (\u_1 - \u_2)]_i/w_i \right\|_2^2,
\end{align*}
where we used notation $[\Blm]_i = [\sqrt{W} \u]_i/w_i$, the definition of matrix $\sqrt{W}$, $1/\gamma(i)$-Lipschitz continuity of $\nabla \W_{\gamma(i),p_i}^*(\cdot)$ for all $i=1,\dots, m$ by \cite[Theorem 2.4]{cuturi2016smoothed}.
Since $\gamma(i) = \gamma/w_i$, $i=1,\dots, m$, we obtain 
\begin{align}
\|\nabla \W^*_{\gamma}(\u_1) - \nabla \W^*_{\gamma }(\u_2)\|^2_2    
& \leq \frac{(\lambda_{\max}(\sqrt{W}))^2}{ \gamma^2} \left\| \sqrt{W} (\u_1 - \u_2) \right\|_2^2\\
& \leq \frac{(\lambda_{\max}(\sqrt{W}))^4}{\gamma^2} \left\|\u_1 - \u_2\right\|_2^2,
\end{align}
 Since $(\lambda_{\max}(\sqrt{W}))^4 = (\lambda_{\max}(W))^2$, we get the statement of Lemma. 
\end{proof}

The following Lemma is inspired by \cite{lan2017communication}.
\begin{lemma}\label{Th:R}
Let $\q^*_\gamma$ be the optimal solution of problem \eqref{prob:reg_bary} with minimal 2-norm, then there exists an optimal dual solution $\Blm^* = [\lm^*_1, \dots, \lm^*_m]$ for problem \eqref{eq:DualPr} satisfying $\| \Blm^* \|_2 \leq R$ with
\begin{align}
    R^2  = \frac{2n\sum_{l=1}^m w_l^2 \|C_l\|^2_\infty}{{\lm}^+_{min}(W)}.
\end{align}
Here ${\lm}^+_{min}(W)$ is the minimal positive eigenvalue of the matrix $W$.
\end{lemma}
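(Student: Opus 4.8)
The plan is to bound the norm of an optimal dual solution by relating it to the norm of the optimal primal solution via the optimality (KKT) conditions of the pair \eqref{eq:Primal}--\eqref{eq:DualPr}, following the strategy from \cite{lan2017communication}. The key observation is that at optimality, the dual variable $\Blm^*$ (the transformed multiplier associated with the consensus constraint $\sqrt{W}\q = 0$) satisfies a relation of the form $\sqrt{W}\,\Blm^* \in \partial\left(\sum_l w_l\W_{\gamma(l)}(p_l,\cdot)\right)(\q^*_\gamma)$, i.e. $\sqrt{W}\,\Blm^*$ equals (minus) the gradient of the regularized objective evaluated at the primal optimum $\q^*_\gamma$. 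Since we may restrict $\Blm^*$ to lie in the range of $\sqrt{W}$ (any component in $\operatorname{Ker}(\sqrt{W}) = \operatorname{Ker}(W)$ contributes nothing to the duality pairing and only increases the norm, so the minimal-norm choice lies in $\operatorname{Im}(\sqrt{W})$), we get $\|\Blm^*\|_2^2 \le \frac{1}{\lm^+_{\min}(W)}\|\sqrt{W}\,\Blm^*\|_2^2$, because on $\operatorname{Im}(\sqrt{W})$ the smallest eigenvalue of $W$ acting is $\lm^+_{\min}(W)$ and $\|\sqrt{W}x\|_2^2 = \la Wx,x\ra \ge \lm^+_{\min}(W)\|x\|_2^2$ there; hence $\|\Blm^*\|_2^2 \le \frac{1}{\lm^+_{\min}(W)}\|\nabla \sum_l w_l\W_{\gamma(l)}(p_l,\cdot)(\q^*_\gamma)\|_2^2$.

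First I would write down the KKT system for \eqref{consensus_problem2}: since $\q^*_\gamma$ minimizes $\sum_l w_l\W_{\gamma(l)}(p_l,q_l)$ over $\{\q : \sqrt{W}\q = 0\}$ (the simplex constraint $q_l \in S_n(1)$ being inactive in the interior because the entropic regularization forces a strictly positive solution), stationarity gives $\nabla_{q_l}\left(w_l\W_{\gamma(l)}(p_l,q_l)\right)\big|_{\q^*_\gamma} + [\sqrt{W}\,\Blm^*]_l = 0$ for each $l$, up to the normalization adjustment coming from the simplex. Then I would bound each block $\|\nabla_{q_l}\left(w_l\W_{\gamma(l)}(p_l,\cdot)\right)(\q^*_\gamma)\|_2$. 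The gradient of $\W_{\gamma(l)}(p_l,\cdot)$ with respect to its second argument is the optimal dual potential of the regularized OT problem, which is determined only up to an additive multiple of $\one$; choosing the centered representative, one can bound its $\infty$-norm (hence its $2$-norm up to $\sqrt{n}$) by $\|C_l\|_\infty$. This is essentially the same kind of estimate as in Lemma~\ref{lemma:bounds_u_v}: the centered optimal potential has oscillation at most $2\|C_l\|_\infty/\gamma(l)$... but actually here one wants a bound on the potential itself, scaled so that it is a subgradient of $\W_{\gamma(l)}$, which gives $\|\cdot\|_\infty \le \|C_l\|_\infty$. Multiplying by $w_l$ and summing the squared blocks yields $\sum_l w_l^2 \cdot n\|C_l\|_\infty^2$, and combining with a factor $2$ (to absorb the simplex-normalization correction and the centering) gives exactly $R^2 = \frac{2n\sum_l w_l^2\|C_l\|_\infty^2}{\lm^+_{\min}(W)}$.

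The main obstacle I anticipate is handling the simplex constraint cleanly: the naive stationarity condition has an extra Lagrange multiplier $\beta_l\one$ for each constraint $\la q_l,\one\ra = 1$, so $\sqrt{W}\,\Blm^*$ equals $-\nabla_{q_l}(w_l\W_{\gamma(l)})(\q^*_\gamma) - \beta_l\one$, not simply minus the gradient. To deal with this I would exploit that $\one \in \operatorname{Ker}(\bar W)$, so projecting onto $\operatorname{Im}(\sqrt{W})$ kills the $\beta_l\one$ terms — equivalently, choose the representative of the centered dual potential of the regularized OT problem, which is a legitimate subgradient of $q_l \mapsto \W_{\gamma(l)}(p_l,q_l)$ restricted to the simplex, and whose centered $\infty$-norm is at most $\|C_l\|_\infty$ (the potential $v$ satisfies $v_j \le \max_i C_{ij}$ and $v_j \ge$ something controlled, after centering giving $\|v\|_\infty \le \|C_l\|_\infty$). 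Getting this potential bound rigorously — using the closed form $[\nabla\W^*_{\gamma(l),p_l}]$ or the Sinkhorn fixed-point characterization and the positivity $\min_i[p_l]_i > 0$ guaranteed by Step~1 of Algorithm~\ref{alg:main} — is the technical heart; the rest is the spectral inequality on $\operatorname{Im}(\sqrt{W})$, which is routine.
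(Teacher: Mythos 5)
Your proposal follows essentially the same route as the paper: it identifies $-\sqrt{W}\Blm^*$ with the gradient of the regularized objective at $\q^*_\gamma$ (the paper gets this via Fenchel conjugacy rather than explicit KKT with simplex multipliers), restricts $\Blm^*$ to $\operatorname{Im}(\sqrt{W})$ to invoke $\lm^+_{\min}(W)$, and bounds each block by the centered dual potential of the entropic OT problem using $\min_i[p_l]_i \ge \e/(8n)$. The only cosmetic difference is bookkeeping of the factor $2$: in the paper the potential bound is $\|C_l\|_\infty + \gamma(l)\ln(8n/\e)$, the $2$ comes from $(a+b)^2 \le 2a^2+2b^2$, and the $\gamma$-term is then neglected since $\gamma \propto \e$, rather than your attribution of the $2$ to centering and simplex-normalization corrections.
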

\begin{proof}
Recall that $\W_{\gamma}(\p, \cdot)$ denotes the objective value in the primal problem \eqref{eq:Primal}. Then
\begin{align}
    - \W_{\gamma}(\p, \q^*_\gamma) &= \la \Blm^*, \sqrt{W} \q_\gamma^*\ra - \W_{\gamma}(\p, \q^*_\gamma) 
    = \W_{\gamma, \p}^*( \sqrt{W} \Blm^*) = \max_{\substack{q_l \in S_n(1), \\ l=1,\dots,m}} \{\la \Blm^*_l, \sqrt{W} \q_l\ra - \W_{\gamma}(\p, \q)\} \notag \\
    &\geq \la \Blm^*, [\sqrt{W} \q]_l\ra - \W_{\gamma}(\p, \q)
    = \la \Blm^*, \sqrt{W} \q - \sqrt{W} \q^*_\gamma \ra - \W_{\gamma}(\p, \q) \notag \\
    &= 
    \la \sqrt{W}\Blm^*, \q^*_\gamma - \q\ra - \W_{\gamma, \p}(\q), \notag
\end{align}
where we used $W^T=W$ and $\q^*_\gamma$ is the regularized barycenter.\\
From this inequality and using the convexity of $\W_{\gamma}(\p, \q)$ we have $\nabla \W_{\gamma}(\p, \q^*) = -\sqrt{W}\Blm^*$.\\
Then we have
\begin{multline}
  \norm{\nabla \W_{\gamma}(\p, \q^*)}_2^2 
    =  \norm{-\sqrt{W}\Blm^*}_2^2 
    = \la \sqrt{W} \Blm^*, \sqrt{W} \Blm^* \ra 
    = \la \Blm^*, W \Blm^* \ra 
    \geq \lm^+_{min}(W) \norm{\Blm^*}_2^2,
\end{multline}
where the last  inequality holds due to $\Blm^* \in \rm \left(Ker(\sqrt{W})\right)^\perp$

Hence, 
we get
\begin{align}
  \| \Blm^* \|_2^2 \leq  R^2 =  \frac{\norm{\nabla \W_{\gamma}(\p,\q_\gamma^*)}_2^2}{{\lm}^+_{min}(W)} =  \frac{\sum_{l=1}^m w_l^2\norm{\nabla \W_{\gamma(l) }(p_l,q^*_\gamma)}_2^2}{{\lm}^+_{min}(W)}
\end{align}

Let us now estimate $\norm{\nabla \W_{\gamma(l) }(p_l,q^*_\gamma)}_2^2$. From \eqref{eq:reg_OT}, we can construct the dual problem to the regularized optimal transport problem
\begin{align}
    \W_{\gamma(l)}(p,q) = \min_{\pi \in  \Pi(p,q)} \left\{ \la \pi, C \ra + \gamma(l) H(\pi) \right\} &= \max_{ \mu,\nu} \left\{ -\la \mu, p \ra - \la \nu,q\ra - \frac{\gamma(l)}{e} e^{-\frac{\mu}{\gamma(l)}} e^{-\frac{C}{\gamma(l)}} e^{-\frac{\nu}{\gamma(l)}} \right\} \notag 
\end{align}
By \cite[Proposition 2.3]{cuturi2016smoothed}, $\nabla_q\W_{\gamma(l)}(p,q) = -\nu^*$, where $\nu^*$ is the solution to the dual problem satisfying $\la \nu^*, \one \ra = 0$. Hence, $\min_{i=1,...,n}\nu^*_i \leq 0 \leq \max_{i=1,...,n}\nu^*_i$. As it follows from \cite[Lemma 1]{dvurechensky2018computational}
\[
\max_{i=1,...,n}\nu^*_i - \min_{i=1,...,n}\nu^*_i \leq \|C_l\|_\infty - \gamma(l) \ln\left({\rm min}_{i} [p_l]_i\right) =\|C_l\|_\infty + \gamma(l)  \ln\left(\frac{8n}{\e}\right),
\]
where we used the fact that the $p_l$ was redefined in Algorithm~\ref{alg:main} in such a way that ${\rm min}_{i} [p_l]_i \geq \frac{\e}{8n}$ and also that our variable $\nu^*$ and their dual variable $u^*$ satisfy $u^* = - \frac{\nu^*}{\gamma(l)}- \frac{1}{2}$.
Making the same arguments as in the proof \cite[Lemma 3.2.]{lin2019efficient}, we obtain from the above two facts that 
\[
\|\nu^*\|_2\leq \|\nu^*\|_\infty \sqrt{n} \leq \|C_l\|_\infty\sqrt{n} + \frac{\gamma \sqrt{n}}{w_l}\ln\left(\frac{8n}{\e}\right),
\]
where we used that $\gamma(l)=\gamma/w_l$.
Thus, 
\[
\sum_{l=1}^m w_l^2\norm{\nabla \W_{\gamma(l) }(p_l,q^*_\gamma)}_2^2 \leq 2n\sum_{l=1}^m w_l^2 \|C_l\|^2_\infty  + 2n\gamma^2 \ln^2\left(\frac{8n}{\e}\right).
\]
Since $\gamma$ is chosen proportional to $\e$ which is small, we can neglect the second term in comparison with the first one.

\end{proof}

\begin{proof}[Proof of Theorem 3]
By Theorem \ref{Th:AGD_compl}, we have
\begin{align}\label{eq:sol_reg}
    \sum_{l=1}^m w_l \W_{\gamma(l)}(p_l, q^N_l) - \sum_{l=1}^m w_l\W_{\gamma(l)}(p_l, q^*) \leq \e/2, \quad \|\sqrt{W}\q^{N}\|_2 \leq \e/2R
\end{align}
Since $KL(\pi|\theta)\geq 0$,  we have
\begin{align}\label{eq:from_unreg_to_reg}
    \sum_{l=1}^mw_l \W(p_l, q^N_l) - \sum_{l=1}^m w_l\W(p_l, q^*) \leq \sum_{l=1}^m w_l\W_{\gamma(l)}(p_l, q^N_l)  - \sum_{l=1}^m w_l\W(p_l,q^*).
\end{align}

By definition of the objective $\W_{\gamma}(\p, \cdot)$ in \eqref{eq:Primal}, 
\begin{align}\label{eq:wass_gamma}
    \sum_{l=1}^m w_l \W_{ \gamma(l)}(p_l, q^*) &= \min_{\substack{q_1=\dots =q_m \\ q_1,\dots q_m \in S_n(1)}}  ~ \sum_{l=1}^m  ~ w_l\min_{\pi \in \Pi(p_l, q_l)}\left\{\sum_{i,j=1}^n  C_{ij}\pi_{ij}  + \gamma KL(\pi|\theta)\right\}\notag \\  
    &\leq \min_{\substack{q_1=\dots =q_m \\ q_1,\dots q_m \in  S_1(n)}}  ~ \sum_{l=1}^m  ~ \left\{\min_{\pi \in \Pi(p_l, q_l)} w_l\sum_{i,j=1}^n  C_{ij}\pi_{ij} + w_l\max_{\pi \in \Pi(p_l, q_l)}\gamma KL(\pi|\theta)\right\}  \notag \\
    &\leq \min_{\substack{q_1=\dots =q_m \\ q_1,\dots q_m \in  S_n(1)}}  ~ \sum_{l=1}^m  ~ \left\{\min_{\pi \in \Pi(p_l, q_l)} \sum_{i,j=1}^n  C_{ij}\pi_{ij} + 2\sum_{l=1}^m w_l\gamma(l) \ln n \right\} \notag\\
    &\leq \min_{\substack{q_1=\dots =q_m \\ q_1,\dots q_m \in  S_n(1)}}  ~ \sum_{l=1}^m \W(p_l, q_l) ~  + 2 \ln n \sum_{l=1}^m w_l \gamma(l)  \notag \\
    &= \sum_{l=1}^m \W(p_l, q^*) + 2\ln n\sum_{l=1}^m w_l\gamma(l) ,
\end{align}
where we chose  $\theta_{ij} = 1/n^2$ for all $i, j=1, \dots n$ so $KL(\pi|\theta) \in [0, 2\ln n]$.

Substituting \eqref{eq:wass_gamma} in \eqref{eq:from_unreg_to_reg} we get

\begin{align}
    \sum_{l=1}^m w_l\W_{\gamma(l)}(p_l, q^N_l) - \sum_{l=1}^m w_l \W(p_l, q^*) \leq &\sum_{l=1}^m w_l\W_{\gamma(l)}(p_l, q^N_l)  \notag \\
    & - \sum_{l=1}^m w_l\W_{\gamma(l)}(p_l, q^*)  + 2\ln n\sum_{l=1}^m w_l \gamma(l)
\end{align}
Using this inequality and \eqref{eq:sol_reg} we get
\begin{align}
    \sum_{l=1}^m w_l  \W(p_l, q_l^N) - \sum_{l=1}^m w_l \W(p_l, q^*) \leq \e/2 +2\ln n\sum_{l=1}^m w_l\gamma_l. 
\end{align}
Since $\gamma(l) = \gamma/w_l$ with $\gamma = \e/(4m \ln n)$, we obtain thath the inequality \eqref{eq:sol_reg} hold. Combining the values of $\gamma$, $L$ from Lemma \ref{Lm:L}, $R$ from Lemma \ref{Th:R} with the estimate for $N$ in Theorem \ref{Th:AGD_compl} and the fact that $\chi(W) = \chi(\bar W)$, we obtain an estimate for the number of iterations of the algorithm.
Let us estimate the complexity of the algorithm. For each $l$ we need to calculate the gradient $\W^*_{\gamma(l), p_l}(\cdot)$, which  requires $O(n^2)$ arithmetic operations. 
To calculate $\sum_{j=1}^{m} W_{lj} {\nabla} \W_{ \gamma(l), p_l)}^*(\lm^{k+1}_j)$ one needs $O(n\cdot \nnz(\bar W_l))$ arithmetic operations, where $\nnz(\bar{W}_l)$ is the number of non-zero elements in matrix $\bar W$ in the $l$ row. More precisely, the dimension of $\nabla \W^*_{\gamma(l), p_l}(\cdot)$ is $n$ and the matrix $W_{lj}$ is diagonal  for each $l,j=1,\dots, m$. Using definition of $W$ we get that the complexity of calculating the gradient. Other operations require $O(n)$ operations. Hence, the complexity of one iteration is 
\begin{align}
    O\left(mn^2+\sum_{l=1}^m n\cdot \nnz(\bar W_l)\right) = O\left(mn^2 + n \cdot \nnz(\bar W)\right)    
\end{align}
and the total complexity follows from multiplying this value by $N$.

\end{proof}

Let us make a couple of remarks.
As for the choice of $\bar{W}$ one can show (by using graph sparsificators) that it can be chosen such that  $\chi(W)=\chi(\bar{W}) = O(\mathrm{Poly}(\ln(m)))$ and $\nnz(\bar{W}) = O(m \mathrm{Poly}(\ln(m)))$. 
For details on the graph sparsificators we refer to \cite{vaidya1990solving,bern2006support,spielman2014nearly}.
In the simple case of equal weights $w_l = \frac{1}{m}$,  
 the complexity of approximating non-reguarized barycenter by accelerated gradient descent can be estimated as $\widetilde{O}(mn^{2.5}/\e)$. In the distributed setting, each of $m$ nodes makes $\widetilde{O}(n^{2.5}/\e)$ arithmetic operations, while the number of communications rounds is $\widetilde{O}(\sqrt{n}/\e)$. The case of general weights is interesting with respect to bootstrap procedure allowing to construct confidence sets for barycenter \cite{ebert2017construction}.

There is an interesting connection of our work with \cite{cuturi2016smoothed}.
Consider a particular case of equal weights $w_1=\dots=w_m =1$ and matrix $\bar{W}$ corresponding to the star graph topology. Then the dual problem \eqref{eq:Primal} has the form
\begin{align}\label{eq:dual_2}
    \min_{\lm_1, \dots, \lm_m \in \R^n} \left\{ \sum_{l=1}^{m-1}\W^{*}_{\gamma, p_l} \left(\lm_l - \frac{1}{m}\lm_m\right) + \W^{*}_{\gamma, p_m} \left(\frac{m-1}{m}\lm_m - \sum_{l=1}^{m-1}\lm_l\right) \right\}
\end{align}
Changing the variable $ \hat{\lm}_l = \lm_l -\frac{1}{m}\lm_m$ we come to the following formulation.
\begin{align}\label{eq:dual_1}
   \min_{\hat \lm_1, \dots, \hat \lm_{m-1} \in \R^n} \sum_{l=1}^{m-1}\W^*_{\gamma, p_l} (\hat \lm_l) +\W^*_{\gamma, p_m}\left(-\sum_{l=1}^{m-1}\hat \lm_l\right)
\end{align}
Hence, the approach presented in \cite{cuturi2016smoothed}, in Theorem 3.1 is the particular case for the approach described above, corresponding to the the star graph.

\section*{Conclusion}

\ag{In this paper we show that IBP algorithm from \cite{benamou2015iterative} for Wasserstein barycenter problem can be implemented in a centralized distributed manner such that each node fulfils $\widetilde{O}\left(n^2/\e^2\right)$ arithmetic operations and the number of communication rounds is $\widetilde{O}\left(1/\e^2\right)$. We note that proper proximal envelope of this algorithm can sometimes accelerate this bounds in terms of the dependence of $\e$. We also describe accelerated primal-dual gradient  algorithm for the same problem. The proposed algorithm can be implemented in a more general decentralized distributed setting such that, to find an $\e$-approximation for the non-regularized barycenter, each node performs $\widetilde{O}(n^{2.5}/\e)$ arithmetic operations and the number of communication rounds is $\widetilde{O}(\sqrt{n}/\e)$.}

\textbf{Acknowledgments.} 
This research was funded by Russian Science Foundation (project 18-71-10108).
The work of Alexey Kroshnin was also conducted within the framework of the HSE University Basic Research Program and funded by the Russian Academic Excellence Project '5-100'.

\bibliographystyle{plainnat}
\bibliography{wass,PD_references,time_varying}


\end{document}